\newtheorem{theorem}{Theorem}
\newtheorem{definition}[theorem]{Definition}
\newtheorem{example}[theorem]{Example}
\newtheorem{proposition}[theorem]{Proposition}
\newtheorem{remark}[theorem]{Remark}
\newenvironment{proof}[1][Proof]{\noindent\textbf{#1.} }{\ \rule{0.5em}{0.5em}}
\begin{document}

\title{Quadratic Forms and their Number Systems and Geometry}
\author{\.{I}skender \"{O}ZT\"{U}RK \\
ORC\.{I}D : 0000-0001-5674-8219, iskender.ozturk06@gmail.com}
\maketitle

\begin{abstract}
$\ $Quatenions, split quaternions and hybrid number are very well known
number systems. These number systems are used to make geometry in Euclidean
and Lorentz spaces. These number systems can be obtained with the help of a
quadratic form. This article examines how to find the corresponding number
system for any quadratic form. As a result of this examination, bilinear
form, vector product, skew symmetric matrix, and rotation matrices
corresponding to any number system were obtained.

\textbf{Keywords:} Quaternions, Split Quaternions, Lorentzian geometry,
Rotation maps, Coquaternions

\textbf{Mathematics Subject Classification: }11R52 15A66 58B34
\end{abstract}

\section{Introduction}

Quadratic forms are a widely studied topic. Complex, hyperbolic, and dual
numbers are well-known two-dimensional number systems. By associating these
number systems with quadratic forms, the geometric structure of these number
systems has been examined. Rotation transformations on the unit circle,
hyperbola and dual circle in $\mathbb{R}^{2}$ are examined in detail in the
article \cite{harkin2004}. In this study, rotational transformation on
three-dimensional objects such as ellipsoid, hyperboloid and cone in $%
\mathbb{R}^{3}$ will be examined. For this, firstly, a number system of any
given quadratic form will be created. Then, using this number system, a
rotational transformation will be performed on the quadric surface
corresponding to a quadratic form, just like the effect of quaternions on
the sphere and the effect of split quaternions on the hyperboloid. Now,
let's introduce the general properties of quadratic forms and their forms in 
$\mathbb{R}^{3}$.

\begin{definition}
\cite{parimala2013} A quadratic form $q:\mathcal{V}\longrightarrow F$ on a
finite dimentional vector space $\mathcal{V}$ over $F$ is a map satisfying:

i\textit{.} $Q\left( \lambda \mathbf{v}\right) =\lambda ^{2}Q\left( \mathbf{v%
}\right) $ for $\mathbf{v}\in $ $\mathcal{V},\lambda \in F.$

ii. The map%
\begin{equation}
\mathcal{B}_{Q}\left( \mathbf{v,w}\right) =\dfrac{1}{2}\left[ Q\left( 
\mathbf{v}+\mathbf{w}\right) -Q\left( \mathbf{v}\right) -Q\left( \mathbf{w}%
\right) \right]  \label{identity properties}
\end{equation}%
is bilinear.\newline
We denote a quadratic form by $\left( \mathcal{V},Q\right) ,$ or simply as $%
Q.$ The bilinear form $\mathcal{B}_{Q}$ is symmetric; $Q$ determines $%
\mathcal{B}_{Q}$ and for all $\mathbf{v}\in \mathcal{V},$%
\[
Q\left( \mathbf{v}\right) =\mathcal{B}_{Q}\left( \mathbf{v,v}\right)
\]%
where $F$ is a field with \textit{char}$F\neq 2$.
\end{definition}

\begin{remark}
Since $F=\mathbb{R}$ will be taken in this article, the character of the
field $F$ will not be mentioned in the rest of the article.
\end{remark}

Given a bilinear form on $\mathbb{R}^{3}$, there exists a unique $I^{\ast
}\in \mathbb{R}^{3\times 3}$ square matrix such that for all $\mathbf{u},%
\mathbf{v}\in \mathbb{R}^{3}$, $\mathcal{B}_{Q}\left( \mathbf{u,v}\right) =%
\mathbf{u}I^{\ast }\mathbf{v}$. $I^{\ast }$ is called "\textit{the matrix
associated with the form}" with respect to the standard basis. A bilinear
form is said skew-symmetric if $\mathcal{B}_{Q}\left( \mathbf{u,v}\right) =-%
\mathcal{B}_{Q}\left( \mathbf{v,u}\right) ,$ respectively. Also, the matrix
associated with a symmetric bilinear form is symmetric, and similarly, the
associated matrix of a skew-symmetric bilinear form is skew-symmetric.

Due to (\ref{identity properties}) symmetric bilinear forms and quadratic
forms are in a one-to-one correspondence over $\mathbb{R}$. The kernel of a
symmetric bilinear form $\mathcal{B}$ on $\mathcal{V}$ is 
\begin{equation}
\ker \mathcal{B}=\left\{ \mathbf{v}\in \mathcal{V}:\mathcal{B}\left( \mathbf{%
v,w}\right) =0\text{ for all }\mathbf{w}\in \mathcal{V}\right\} .
\label{kernel}
\end{equation}%
As can be seen from the definition (\ref{kernel}) kernel of a bilinear form
is a linear subspace of $\mathcal{V}$. The kernel of a quadratic form is
defined as the kernel of corresponding bilinear form. A symmetric bilinear
form $\mathcal{B}$ is called%
\[
\begin{array}{ll}
\text{degenerate,} & \text{if }\ker \mathcal{B}\neq \left\{ 0\right\} , \\ 
\text{non-degenerate,} & \text{if }\ker \mathcal{B}=\left\{ 0\right\} .%
\end{array}%
\]

A symmetric bilinear form is degenerate if and only if its matrix with
respect to one basis has determinant $0$. A quadratic form is called
degenerate or nondegenerate if the corresponding bilinear form is degenerate
or nondegenerate. The vector space $\mathbb{R}^{3}$ over $\mathbb{R}$, a
bilinear form $\mathcal{B}$ (and the corresponding quadratic form) is called%
\[
\begin{array}{ll}
\text{positive semidefinite,} & \text{if }\mathcal{B}_{Q}\left( \mathbf{v,v}%
\right) \geq 0\text{, for }\forall \mathbf{v}\in \mathbb{R}^{3} \\ 
\text{positive definite} & \text{if }\mathcal{B}_{Q}\left( \mathbf{v,v}%
\right) >0\text{, for }\forall \mathbf{v}\in \mathbb{R}^{3}-\left\{ 0\right\}
\\ 
\text{indefinite} & \text{if }\mathcal{B}_{Q}\left( \mathbf{v,v}\right) <0%
\text{, }\mathcal{B}_{Q}\left( \mathbf{w,w}\right) >0\text{, for }\exists 
\mathbf{v}\text{,}\mathbf{w}\in \mathbb{R}^{3}.%
\end{array}%
\]

The conditions negative semidefinite and negative definite are defined
similarly. But, a real non-degenerate symmetric bilinear form is either
positive definite, negative definite, or indefinite. The character of a
bilinear form as positive or negative definite (or neither) can also be
determined by the determinant test. But these details will not be discussed
here.

A scalar (bilinear) product on a real vector space $\mathcal{V}$ is a
non-degenerate symmetric bilinear form. We will represent the scalar product
as $\left\langle \cdot ,\cdot \right\rangle .$ A vector space equipped with
a scalar product is called an inner product space and it is denoted as $(%
\mathcal{V},\left\langle \cdot ,\cdot \right\rangle )$. For any vectors $%
\mathbf{u,v}$ in an inner product space are called orthogonal if $%
\left\langle \mathbf{u},\mathbf{v}\right\rangle =0$, also if $\left\langle 
\mathbf{u},\mathbf{u}\right\rangle =\left\langle \mathbf{v},\mathbf{v}%
\right\rangle =1$, then these vectors are called orthonormal.

The Euclidean scalar product is a scalar product that is positive definite.
The subject of this article the vector space $\mathbb{R}^{3}$ equipped with
a Euclidean scalar product is called a Euclidean vector space. The Euclidean
norm of a vector $\mathbf{x}$ is%
\[
\left\Vert \mathbf{x}\right\Vert =\sqrt{\left\langle \mathbf{x},\mathbf{x}%
\right\rangle _{\mathbb{E}}}.
\]%
A Lorentz scalar product is an indefinite scalar product. The vector space $%
\mathbb{R}^{3}$ equipped with a Lorentz scalar product with signature $%
\left( -,+,+\right) $ is called a Lorentz vector space. A vector $\mathbf{v}$
in a vector space with indefinite scalar product is called%
\[
\begin{array}{lll}
\text{spacelike} & \text{if} & \left\langle \mathbf{x},\mathbf{x}%
\right\rangle _{\mathbb{L}}>0, \\ 
\text{timelike} & \text{if} & \left\langle \mathbf{x},\mathbf{x}%
\right\rangle _{\mathbb{L}}<0 \\ 
\text{lightlike} & \text{if} & \left\langle \mathbf{x},\mathbf{x}%
\right\rangle _{\mathbb{L}}=0,\text{ }\mathbf{x}\neq 0.%
\end{array}%
\]

The Euclidean norm of a vector $\mathbf{x}$ is%
\[
\left\Vert \mathbf{x}\right\Vert =\sqrt{\left\vert \left\langle \mathbf{x},%
\mathbf{x}\right\rangle _{\mathbb{L}}\right\vert }.
\]%
The set of lightlike vectors is called the light cone and zero is spacelike
vector.

Let $\mathcal{V}$ be a finite dimensional real vector space equipped with a
scalar product $\left\langle \cdot ,\cdot \right\rangle $. An orthogonal map
on $\mathcal{V}$ is a linear map $F:\mathcal{V}\longrightarrow \mathcal{V}$
that satisfies%
\begin{equation}
\begin{array}{lll}
\left\langle F\mathbf{u},F\mathbf{v}\right\rangle =\left\langle \mathbf{u},%
\mathbf{v}\right\rangle & \text{for all} & \mathbf{u,v}\in \mathcal{V}\text{.%
}%
\end{array}
\label{orthogonal map}
\end{equation}

Since the scalar product is nondegenerate, this suggests that $F$ is
injective. In this study, since $\dim \mathcal{V}=3$ was assumed to be
finite dimensional, it follows that $F$ is surjective, thus $F$ is
invertible. The orthogonal transformations of a inner product space $(%
\mathcal{V},\left\langle \cdot ,\cdot \right\rangle )$ constitute a group,
this group is denoted by $O(\mathcal{V},\left\langle \cdot ,\cdot
\right\rangle )$. If the determinate of the matrix $A$ corresponding to the
linear map $F$ is $+1$ or $-1$, then this group is called a special
orthogonal group and this group is denoted by $SO(3)$ in Euclidean space and 
$SO(3,1)$ in Lorentz space \cite{ozdemir2005}, \cite{erdogdu2015}, \cite%
{Springborn2021}.

In $\mathbb{R}^{3}$, a quadric can be given by the equatrion%
\begin{equation}
Ax^{2}+By^{2}+Cz^{2}+2Dxy+2Exz+2Fyz+Gx+Dy+Iz+J=0.  \label{quadric}
\end{equation}%
As can be seen, when the first six coefficients are taken as zero, the
equation (\ref{quadric}) indicates a plane. In the most general form, the
equation (\ref{quadric}) denote ellipsoid, hyperboloid, and paraboloid. A
quadratic form a set of points $\left( x,y,z\right) \in \mathbb{R}^{3}$
satisfying%
\[
Q\left( x,y,z\right) =Ax^{2}+By^{2}+Cz^{2}+2Dxy+2Exz+2Fyz
\]%
where $A,B,C,D,E,F\in \mathbb{R}$ with $A,B,C,D,E,F$ not all zero and by $%
\mathbf{M}_{Q}$ its matrix%
\begin{equation}
\mathbf{M}_{Q}=%
\begin{bmatrix}
A & D & E \\ 
D & B & F \\ 
E & F & C%
\end{bmatrix}%
\text{.}  \label{1. katsayilar matrisi}
\end{equation}%
The signature of a matrix $\mathbf{M}_{Q}\in \mathbb{R}^{3\times 3}$ is $%
\left( k,l,m\right) $ where $k$ is the number of strictly positive
eingenvalues of $\mathbf{M}_{Q}$, $l$ is the number of strictly zero
eingenvalues of $\mathbf{M}_{Q}$, $m$\ is the number of strictly negative
eingenvalues of $\mathbf{M}_{Q}$. Namely, the rank of $\mathbf{M}_{Q}$\ is $%
k+m$. For $rank\mathbf{M}_{Q}=3,$ classifies quadratic form in the following
table:%
\begin{equation}
\begin{tabular}{|c|c|}
\hline
Signature of $\mathbf{M}_{Q}$ & Type of quadratic form \\ \hline
$\left( 3,0,0\right) $ or $\left( 0,0,3\right) $ & Ellipsoid \\ \hline
$\left( 2,0,1\right) $ or $\left( 1,0,2\right) $ & cone, 1 or 2-sheeted
hyperboloid \\ \hline
\end{tabular}
\label{ellipsoid}
\end{equation}

The most famous geometric object in which quadratic forms are applied to
geometry is the sphere. If we take the radius of the sphere as the unit of
length, we can do the geometry on the $3$-dimensional unit sphere,%
\begin{equation}
S^{2}=\left\{ \mathbf{x}\in \mathbb{R}^{3}:\left\Vert \mathbf{x}\right\Vert
=1\right\} \subset \mathbb{R}^{3}\text{,}  \label{unit sphere}
\end{equation}%
where $\left\Vert \mathbf{x}\right\Vert =\sqrt{\left\langle \mathbf{x},%
\mathbf{x}\right\rangle _{\mathbb{E}}},$ and $\left\langle \mathbf{x},%
\mathbf{y}\right\rangle _{\mathbb{E}}=\dsum\limits_{i=1}^{3}x_{i}y_{i}$ is
the standard Euclidian scalar product. Another famous sphere is the
3-dimensional Lorentz-Minkowski space sphere%
\begin{equation}
H^{2}=\left\{ \mathbf{x}\in \mathbb{R}^{3}:\left\langle \mathbf{x},\mathbf{x}%
\right\rangle _{\mathbb{L}}=-1\right\} \subset \mathbb{R}^{3}
\label{hiperbolic sphere}
\end{equation}%
where $\left\langle \cdot ,\mathbf{\cdot }\right\rangle _{\mathbb{L}}$
denotes the Lorentz scalar product%
\[
\left\langle \mathbf{x},\mathbf{y}\right\rangle _{\mathbb{L}%
}=-x_{1}y_{1}+x_{2}y_{2}+x_{3}y_{3}.
\]

Generalized scalar product can be found in detail in \"{O}zdemir and \c{S}im%
\c{s}ek articles \cite{ozdemir2016}, \cite{simsek2016}, \cite{simsek2017}.
In addition, the authors also examined linear transformations for
generalized scalar products in the same papers.

\subsection{Quaternions}

Number sets such as quaternions, split quaternions, hybrid numbers is
defined through quadratic forms. These number systems form a geometry
according to the quadratic form in which they are defined. Three-dimensional
Euclidean space and Lorentz-Minkowski space are associated with quaternions
and split quaternions, respectively. The geometric structure of hybrid
numbers is newly studied. These number systems have provided many
conveniences in the geometry to which they are associated. Let's briefly
explain the quaternion, split quaternion and hybrid number systems:

In 1843, Sir William Rowan Hamilton invented the quaternion algebra, which
is familiar denoted $\mathbb{H}$ in his honor. The set of quaternions can be
stated as follows:%
\[
\mathbb{H=}\{a+b\mathbf{i}+c\mathbf{j}+d\mathbf{k}:a,b,c,d\in \mathbb{R}%
\text{, }\mathbf{i}^{2}=\mathbf{j}^{2}=\mathbf{k}^{2}=\mathbf{ijk=}-1\}.
\]%
It was soon realized that the quaternion product could be used to represent
the rotational transformation in $\mathbb{E}^{3}$. In 1855, Arthur Cayley
explored that quaternions could also be used to represent rotations in $%
\mathbb{E}^{4}$. Quaternion algebra has accepted a significant role lately
in different areas of physical science; for example, synthesis of mechanism
and machines, in differential geometry, simulation of particle motion in
molecular physic, in analysis, and quaternionic formulation of the equation
of motion in the theory of relativity. Additionally, quaternions have been
indicated in terms of $4\times 4$ matrices by means of left and right
multiplication operators. If the literature is examined, it can be clearly
seen that the geometric properties of quaternions are a number system
designed to fit the orthonormal frame of $\mathbb{E}^{3}$ \cite{goldman2011}%
, \cite{goldman2010}, \cite{demirci2020}.

The set of split quaternions presented by Cockle \cite{cockle1849} in 1849
is as follows:

\[
\widehat{\mathbb{H}}=\{a+b\mathbf{i}+c\mathbf{j}+d\mathbf{k}:a,b,c,d\in 
\mathbb{R}\text{, }\mathbf{i}^{2}=-1\text{, }\mathbf{j}^{2}=\mathbf{k}^{2}=%
\mathbf{ijk=}1\}.
\]%
Lately, since split quaternions are used to state Lorentzian rotations,
there are studies on geometric and physical applications of split
quaternions that call for solving split quaternionic equations. In general,
split quaternions are used to express rotation and reflection
transformations for timelike and spacelike vectors in Minkowski 3-space.
Because the units of split quaternions are compatible with the unit base
vectors in Minkowski 3-space \cite{erdogdu2015}, \cite{ozdemir2006}, \cite%
{ozdemir2005}, \cite{ozdemir2014}. Rotations around the lightlike axis were
also investigated in \cite{goemans2016}, \cite{nevsovic2016}.

In 2018, \"{O}zdemir invented the set of hybrid numbers \cite{ozdemir2018},
which is a noncommutative ring, is a generalization of complex, hyperbolic
and dual number sets and it is defined as 
\[
\mathbb{K=}\left\{ a+b\mathbf{i}+c\mathbf{\varepsilon }+d\mathbf{h}%
:a,b,c,d\in 
\mathbb{R}
\text{, }\mathbf{i}^{2}=-1\text{, }\mathbf{\varepsilon }^{2}=0\text{, }%
\mathbf{h}^{2}=1\text{, }\mathbf{ih=-hi=\varepsilon }+\mathbf{i}\right\} 
\text{.}
\]%
Generalized quaternions and their algebraic properties are discussed in
detail in \cite{yayli}, \cite{senturk2021}.

\section{Generalized Quadratic Number System}

Quadratic forms are geometric structures whose geometry is studied by
associating them with a number system. Now we will present here a way to
give you all the number systems that can be constructed for all quadratic
forms. Let's define the generalized quadratic number system as follows:

\begin{definition}
The set of generalized quadratic number system, denoted by $\mathfrak{Q}$,
is defined as%
\[
\mathfrak{Q}=\left\{ 
\begin{array}{c}
\mathbf{q}=q_{0}+q_{1}\mathbf{i}+q_{2}\mathbf{j}+q_{3}\mathbf{k:i}^{2}=A,%
\text{ }\mathbf{j}^{2}=B,\text{ }\mathbf{k}^{2}=C, \\ 
\mathbf{ij}=D+\alpha _{1}\mathbf{i}+\alpha _{2}\mathbf{j+}\alpha _{3}\mathbf{%
k,}\text{ }\mathbf{ik}=E+\beta _{1}\mathbf{i}+\beta _{2}\mathbf{j+}\beta _{3}%
\mathbf{k,}\text{ }\mathbf{jk}=F+\lambda _{1}\mathbf{i}+\lambda _{2}\mathbf{%
j+}\lambda _{3}\mathbf{k,} \\ 
\mathbf{ji}=D-\alpha _{1}\mathbf{i}-\alpha _{2}\mathbf{j-}\alpha _{3}\mathbf{%
k,}\text{ }\mathbf{ki}=E-\beta _{1}\mathbf{i}-\beta _{2}\mathbf{j-}\beta _{3}%
\mathbf{k,}\text{ }\mathbf{kj}=F-\lambda _{1}\mathbf{i}-\lambda _{2}\mathbf{%
j-}\lambda _{3}\mathbf{k,}%
\end{array}%
\right\} ,
\]

where $A,B,C,D,E,F\in \mathbb{R}$,%
\[
\text{for }%
\begin{array}{lll}
\alpha _{1}=\lambda _{3}\text{,} & \beta _{1}=-\lambda _{2}\text{,} & \beta
_{3}=-\alpha _{2};%
\end{array}%
\]%
\begin{eqnarray*}
&&%
\begin{array}{lll}
A=\alpha _{2}^{2}+\alpha _{3}\beta _{2}\text{,} & B=\alpha _{1}^{2}-\lambda
_{1}\alpha _{3}\text{,} & C=\beta _{1}^{2}+\lambda _{1}\beta _{2}\text{,}%
\end{array}
\\
&&%
\begin{array}{lll}
D=-\left( \alpha _{1}\alpha _{2}+\alpha _{3}\beta _{1}\right) \text{,} & 
E=-\left( \beta _{2}\alpha _{1}-\alpha _{2}\beta _{1}\right) \text{,} & 
F=\alpha _{1}\beta _{1}+\lambda _{1}\alpha _{2}\text{.}%
\end{array}%
\end{eqnarray*}
\end{definition}

This set of numbers can be thought of as a set of quadruplets defined below:%
\[
\begin{array}{llll}
1\longleftrightarrow \left( 1,0,0,0\right) \mathbf{,} & \mathbf{i}%
\longleftrightarrow \left( 0,1,0,0\right) \mathbf{,} & \mathbf{j}%
\longleftrightarrow \left( 0,0,1,0\right) \mathbf{,} & \mathbf{k}%
\longleftrightarrow \left( 0,0,0,1\right) \text{.}%
\end{array}%
\]%
For a quadratic number $\mathbf{q}=q_{0}+q_{1}\mathbf{i}+q_{2}\mathbf{j}%
+q_{3}\mathbf{k,}$ a real number $q_{0}$ is called the scalar part and is
denoted by $S\left( \mathbf{q}\right) $ and the part $q_{1}\mathbf{i}+q_{2}%
\mathbf{j}+q_{3}\mathbf{k}$ is called the vector part, and is denoted by $%
\mathbf{v}_{\mathbf{q}}$. Now the matrix (\ref{1. katsayilar matrisi}) can
be written as:

\begin{equation}
\mathbf{M}_{\mathfrak{Q}}=%
\begin{bmatrix}
\alpha _{2}^{2}+\alpha _{3}\beta _{2} & -\left( \alpha _{1}\alpha
_{2}+\alpha _{3}\beta _{1}\right) & \alpha _{2}\beta _{1}-\beta _{2}\alpha
_{1} \\ 
-\left( \alpha _{1}\alpha _{2}+\alpha _{3}\beta _{1}\right) & \alpha
_{1}^{2}-\lambda _{1}\alpha _{3} & \lambda _{1}\alpha _{2}+\alpha _{1}\beta
_{1} \\ 
\alpha _{2}\beta _{1}-\beta _{2}\alpha _{1} & \lambda _{1}\alpha _{2}+\alpha
_{1}\beta _{1} & \beta _{1}^{2}+\lambda _{1}\beta _{2}%
\end{bmatrix}%
.  \label{katsayilar matrisi}
\end{equation}

\subsection{Operation in the Quadratic Numbers}

Two quadratic numbers are equal if all their components are equal, one by
one. The sum of two quadratic numbers is defined by summing their
components. Addition operation in the quadratic numbers is both commutative
and associative. Zero is null element. Regarding the addition operation, the
inverse element of $\mathbf{q}$ is $-\mathbf{q}$, which is defined as having
all components of $\mathbf{q}$ changed in their signs. This requires that $%
\left( \mathfrak{Q},+\right) $ is an Abelian group.

The quadratic numbers product%
\[
\mathbf{qp}=\left( q_{0}+q_{1}\mathbf{i}+q_{2}\mathbf{j}+q_{3}\mathbf{k}%
\right) \left( p_{0}+p_{1}\mathbf{i}+p_{2}\mathbf{j}+p_{3}\mathbf{k}\right)
\]%
is obtained by distributing the terms on the right as ordinary algebra.
Given the definition of the set of quadratic numbers, the following
multiplication table can be constructed.

\[
\begin{tabular}{|l|l|l|l|l|}
\hline
$\mathbf{\cdot }$ & $1$ & $\mathbf{i}$ & $\mathbf{j}$ & $\mathbf{k}$ \\ 
\hline
$1$ & $1$ & $\mathbf{i}$ & $\mathbf{j}$ & $\mathbf{k}$ \\ \hline
$\mathbf{i}$ & $\mathbf{i}$ & $A$ & $D+\alpha _{1}\mathbf{i}+\alpha _{2}%
\mathbf{j+}\alpha _{3}\mathbf{k}$ & $E+\beta _{1}\mathbf{i}+\beta _{2}%
\mathbf{j+}\beta _{3}\mathbf{k}$ \\ \hline
$\mathbf{j}$ & $\mathbf{j}$ & $D-\alpha _{1}\mathbf{i}-\alpha _{2}\mathbf{j-}%
\alpha _{3}\mathbf{k}$ & $B$ & $F+\lambda _{1}\mathbf{i}+\lambda _{2}\mathbf{%
j+}\lambda _{3}\mathbf{k}$ \\ \hline
$\mathbf{k}$ & $\mathbf{k}$ & $E-\beta _{1}\mathbf{i}-\beta _{2}\mathbf{j-}%
\beta _{3}\mathbf{k}$ & $F-\lambda _{1}\mathbf{i}-\lambda _{2}\mathbf{j-}%
\lambda _{3}\mathbf{k}$ & $C$ \\ \hline
\end{tabular}%
\]

This table will be used to multiplication any two quadratic numbers. The
table shows us that the multiplication operation in the quadratic numbers is
not commutative. But it has the property of associativity. The conjugate of
a quadratic number $\mathbf{q}=q_{0}+q_{1}\mathbf{i}+q_{2}\mathbf{j}+q_{3}%
\mathbf{k,}$ denoted by $\overline{\mathbf{q}}$, is defined as%
\[
\overline{\mathbf{q}}=q_{0}-q_{1}\mathbf{i}-q_{2}\mathbf{j}-q_{3}\mathbf{k}%
\text{\textbf{.}}
\]%
The conjugate of the sum of quadratic numbers is equal to the sum of their
conjugate:%
\[
\overline{\mathbf{q+p}}=\overline{\mathbf{q}}+\overline{\mathbf{p}}\text{.}
\]%
Besides, according to quadratic numbers product, we have $\mathbf{q}%
\overline{\mathbf{q}}=\overline{\mathbf{q}}\mathbf{q}$ and its value is%
\[
\mathbf{q}\overline{\mathbf{q}}=\overline{\mathbf{q}}\mathbf{q}%
=q^{2}-Ax^{2}-By^{2}-Cz^{2}-2Dxy-2Exz-2Fyz.
\]%
The real number $\sqrt{\left\vert \mathbf{q}\overline{\mathbf{q}}\right\vert 
}$ will be called the norm of the quadratic number $\mathbf{q}$ and will be
denoted by $\left\Vert \mathbf{q}\right\Vert $.

Let's take the quadratic numbers $\mathbf{q}=q_{0}+q_{1}\mathbf{i}+q_{2}%
\mathbf{j}+q_{3}\mathbf{k}$ and $\mathbf{p}=p_{0}+p_{1}\mathbf{i}+p_{2}%
\mathbf{j}+p_{3}\mathbf{k}$. The scalar product of $\mathbf{q}$ and $\mathbf{%
p}$ is defined as%
\begin{eqnarray*}
\left\langle \mathbf{q},\mathbf{p}\right\rangle _{\mathfrak{Q}} &=&\frac{%
\mathbf{q}\overline{\mathbf{p}}+\mathbf{p}\overline{\mathbf{q}}}{2} \\
&=&p_{0}q_{0}-Ap_{1}q_{1}-Bp_{2}q_{2}-Cp_{3}q_{3}-Fp_{2}q_{3}-Fp_{3}q_{2}-Dp_{1}q_{2}-Dp_{2}q_{1}-Ep_{1}q_{3}-Ep_{3}q_{1}
\\
&=&p_{0}q_{0}-\left( Ap_{1}q_{1}+Bp_{2}q_{2}+Cp_{3}q_{3}+D\left(
p_{1}q_{2}+p_{2}q_{1}\right) +E\left( p_{1}q_{3}+p_{3}q_{1}\right) +F\left(
p_{2}q_{3}+p_{3}q_{2}\right) \right) \text{.}
\end{eqnarray*}%
So, scalar product of quadratic vectors $\mathbf{v}=\left(
v_{1},v_{2},v_{3}\right) $ and $\mathbf{u}=\left( u_{1},u_{2},u_{3}\right) $
is%
\[
\left\langle \mathbf{u},\mathbf{v}\right\rangle _{\mathfrak{Q}}=\Delta
\left( Au_{1}v_{1}+Bu_{2}v_{2}+Cu_{3}v_{3}+D\left(
u_{1}v_{2}+u_{2}v_{1}\right) +E\left( u_{1}v_{3}+u_{3}v_{1}\right) +F\left(
u_{2}v_{3}+u_{3}v_{2}\right) \right)
\]%
where $\Delta =-1$ if the quadratic form corresponding to $\left\langle 
\mathbf{u},\mathbf{v}\right\rangle _{\mathfrak{Q}}$ is an ellipsoid, and $%
\Delta =1$ if the quadratic form corresponding to $\left\langle \mathbf{u},%
\mathbf{v}\right\rangle _{\mathfrak{Q}}$ is hyperboloid. Thus, the matrix
associated with the bilinear form $\left\langle \mathbf{u},\mathbf{v}%
\right\rangle _{\mathfrak{Q}}$ becomes%
\begin{equation}
\mathfrak{M}_{\mathfrak{Q}}=\Delta M_{\mathfrak{Q}}=\Delta 
\begin{bmatrix}
A & D & E \\ 
D & B & F \\ 
E & F & C%
\end{bmatrix}%
.  \label{deltali matris}
\end{equation}%
While obtaining this matrix (\ref{deltali matris}), the type of quadratic
form can be determined by looking at the sign of the eigenvalues of the
matrix (\ref{1. katsayilar matrisi}). After that, we'll work with vectors in
the quadratic 3-space and find the norm of the vectors with the help of this
scalar product. Also, the vector product of two quadratic numbers $\mathbf{q}
$ and $\mathbf{p}$ defined as%
\begin{eqnarray*}
\mathbf{q}\times _{\mathfrak{Q}}\mathbf{p} &=&\frac{\mathbf{q}\overline{%
\mathbf{p}}-\mathbf{p}\overline{\mathbf{q}}}{2} \\
&=&\left( p_{0}q_{1}-p_{1}q_{0}+\alpha _{1}\left(
p_{1}q_{2}-p_{2}q_{1}\right) +\beta _{1}\left( p_{1}q_{3}-p_{3}q_{1}\right)
+\lambda _{1}\left( p_{2}q_{3}-p_{3}q_{2}\right) \right) \mathbf{i+} \\
&&+\left( p_{0}q_{2}-p_{2}q_{0}+\alpha _{2}\left(
p_{1}q_{2}-p_{2}q_{1}\right) +\beta _{1}\left( p_{3}q_{2}-p_{2}q_{3}\right)
+\beta _{2}\left( p_{1}q_{3}-p_{3}q_{1}\right) \right) \mathbf{j+} \\
&&+\left( p_{0}q_{3}-q_{0}p_{3}+\alpha _{1}\left(
p_{2}q_{3}-p_{3}q_{2}\right) +\alpha _{2}\left( p_{3}q_{1}-p_{1}q_{3}\right)
+\alpha _{3}\left( p_{1}q_{2}-p_{2}q_{1}\right) \right) \mathbf{k}\text{.}
\end{eqnarray*}

\begin{definition}
For $\mathbf{u}=\left( u_{1},u_{2},u_{3}\right) ,$ $\mathbf{v}=\left(
v_{1},v_{2},v_{3}\right) ,$ the vector product in quadratic 3-space is%
\[
\mathbf{u}\times \mathbf{v}=\left\vert 
\begin{array}{ccc}
\lambda _{1}\mathbf{i}-\beta _{1}\mathbf{j+}\alpha _{1}\mathbf{k} & -\beta
_{1}\mathbf{i}-\beta _{2}\mathbf{j}+\alpha _{2}\mathbf{k} & \alpha _{1}%
\mathbf{i}+\alpha _{2}\mathbf{j+}\alpha _{3}\mathbf{k} \\ 
u_{1} & u_{2} & u_{3} \\ 
v_{1} & v_{2} & v_{3}%
\end{array}%
\right\vert
\]%
Therefore, the quadratic product of $\mathbf{q}=S_{\mathbf{q}}+\mathbf{v}_{%
\mathbf{q}}$ and $\mathbf{p}=S_{\mathbf{p}}+\mathbf{v}_{\mathbf{p}}\ $can be
written as%
\begin{equation}
\mathbf{qp}=S_{\mathbf{q}}S_{\mathbf{p}}+S_{\mathbf{q}}\mathbf{v}_{\mathbf{p}%
}+S_{\mathbf{p}}\mathbf{v}_{\mathbf{q}}+\Delta \left\langle \mathbf{v}_{%
\mathbf{q}},\mathbf{v}_{\mathbf{p}}\right\rangle +\mathbf{v}_{\mathbf{q}%
}\times \mathbf{v}_{\mathbf{p}}.  \label{quadraticcarpim}
\end{equation}%
Actually,%
\begin{eqnarray*}
\mathbf{qp} &=&%
\begin{bmatrix}
q_{0} & Aq_{1}+Dq_{2}+Eq_{3} & Bq_{2}+Dq_{1}+Fq_{3} & Cq_{3}+Eq_{1}+Fq_{2}
\\ 
q_{1} & q_{0}-q_{2}\alpha _{1}-q_{3}\beta _{1} & q_{1}\alpha
_{1}-q_{3}\lambda _{1} & \lambda _{1}q_{2}+q_{1}\beta _{1} \\ 
q_{2} & -q_{2}\alpha _{2}-\beta _{2}q_{3} & q_{0}+q_{1}\alpha
_{2}+q_{3}\beta _{1} & \beta _{2}q_{1}-\beta _{1}q_{2} \\ 
q_{3} & -\alpha _{3}q_{2}+q_{3}\alpha _{2} & \alpha _{3}q_{1}-q_{3}\alpha
_{1} & q_{0}-q_{1}\alpha _{2}+q_{2}\alpha _{1}%
\end{bmatrix}%
\begin{bmatrix}
p_{0} \\ 
p_{1} \\ 
p_{2} \\ 
p_{3}%
\end{bmatrix}
\\
&=&%
\begin{bmatrix}
p_{0}q_{0}+Ap_{1}q_{1}+Bp_{2}q_{2}+Cp_{3}q_{3}+D\left(
p_{1}q_{2}+p_{2}q_{1}\right) +E\left( p_{1}q_{3}+p_{3}q_{1}\right) +F\left(
p_{2}q_{3}+p_{3}q_{2}\right) \\ 
p_{0}q_{1}+p_{1}q_{0}-\alpha _{1}p_{1}q_{2}+\alpha _{1}p_{2}q_{1}-\beta
_{1}p_{1}q_{3}+\beta _{1}p_{3}q_{1}-\lambda _{1}p_{2}q_{3}+\lambda
_{1}p_{3}q_{2} \\ 
p_{0}q_{2}+p_{2}q_{0}-\alpha _{2}p_{1}q_{2}+\alpha _{2}p_{2}q_{1}+\beta
_{1}p_{2}q_{3}-\beta _{1}p_{3}q_{2}-\beta _{2}p_{1}q_{3}+\beta _{2}p_{3}q_{1}
\\ 
p_{0}q_{3}+q_{0}p_{3}-\alpha _{1}p_{2}q_{3}+\alpha _{1}p_{3}q_{2}+\alpha
_{2}p_{1}q_{3}-\alpha _{2}p_{3}q_{1}-\alpha _{3}p_{1}q_{2}+\alpha
_{3}p_{2}q_{1}%
\end{bmatrix}%
.
\end{eqnarray*}%
and also,%
\[
\left( \mathbf{u}\times \mathbf{v}\right) \times \mathbf{w}=\Delta
\left\langle \mathbf{v,w}\right\rangle \mathbf{u}-\Delta \left\langle 
\mathbf{\mathbf{u},w}\right\rangle \mathbf{v.}
\]
\end{definition}

This scalar and vector products on $\mathbb{R}^{3}$ allows us to define a
new metric space. We will call it the quadratic metric.

\section{Geometry of Non-Degenerate Quadratic Metrics}

This section will focus on two types of quadratic spaces derived from
quadratic metrics. These are the ellipsoid quadratic space and the
hyperboloid quadratic space.

\section{Ellipsoid Quadratic Spaces}

\begin{definition}
For a quadratic number $\mathbf{X}=q+x\mathbf{i}+y\mathbf{j}+z\mathbf{k}$,
the vector,%
\[
\mathbf{v}_{\mathbf{X}}=\left( x,y,z\right) \in \mathbb{R}^{3}
\]%
is called the quadratic vector of $\mathbf{X}$ and a pure quadratic number $x%
\mathbf{i}+y\mathbf{j}+z\mathbf{k}$ will be taken as a vector of quadratic
3-space.
\end{definition}

\begin{definition}
The unit ellipsoid sphere is the set of all ellipsoid vector of $\mathbb{R}%
^{3}$ is%
\[
\mathfrak{E}=\left\{ \left( x,y,z\right) \in \mathbb{R}%
^{3}:Ax^{2}+By^{2}+Cz^{2}+2Dxy+2Exz+2Fyz=-1\right\} \cup \left\{ \left(
0,0,0\right) \right\} .
\]
\end{definition}

\begin{definition}
If the signature of the matrix (\ref{deltali matris}) associated with the
bilinear form $\left\langle \mathbf{\cdot },\mathbf{\cdot }\right\rangle _{%
\mathfrak{Q}}$ is $(3,0,0)$, then $\left\langle \mathbf{\cdot },\mathbf{%
\cdot }\right\rangle _{\mathfrak{Q}}$ is a positive definite bilinear form.
Therefore, the quadratic form corresponding to the bilinear form is
ellipsoid. So, the quadratic number $\mathbf{X}=q+x\mathbf{i}+y\mathbf{j}+z%
\mathbf{k}$ is called the ellipsoid number, and $\mathbf{v}_{\mathbf{X}%
}=\left( x,y,z\right) $ is called the ellipsoid vector. We will denote the
set of ellipsoid quadratic number by $\mathfrak{EQ.}$
\end{definition}

A scalar product on a real vector space $\mathcal{V}$ is a non-degenerate
symmetric bilinear form. A positively definite scalar product is a Euclidean
scalar product and its sign is $(3,0,0)$. The space equipped with a
Euclidean scaler product is called a Euclidean vector space \cite%
{Springborn2021}.

An Euclidean scalar product is a scalar product that is positive definite,
and its one with signature $(3,0,0)$. A vector space equipped with a
Euclidean scalar product is called a Euclidean vector space.

\begin{definition}
The ellipsoid quadratic space is the vector space $\mathbb{R}^{3}$ equipped
with the positive definite symmetric bilinear form%
\begin{eqnarray*}
\left\langle \mathbf{u},\mathbf{v}\right\rangle _{\mathfrak{EQ}} &=&\Delta
\left( Au_{1}v_{1}+Bu_{2}v_{2}+Cu_{3}v_{3}+D\left(
u_{1}v_{2}+u_{2}v_{1}\right) +E\left( u_{1}v_{3}+u_{3}v_{1}\right) +F\left(
u_{2}v_{3}+u_{3}v_{2}\right) \right) \\
&=&%
\begin{bmatrix}
u_{1} \\ 
u_{2} \\ 
u_{3}%
\end{bmatrix}%
^{T}%
\begin{bmatrix}
-A & -D & -E \\ 
-D & -B & -F \\ 
-E & -F & -C%
\end{bmatrix}%
\begin{bmatrix}
v_{1} \\ 
v_{2} \\ 
v_{3}%
\end{bmatrix}%
\end{eqnarray*}%
for the vectors $\mathbf{u}=\left( u_{1},u_{2},u_{3}\right) $, $\mathbf{v}%
=\left( v_{1},v_{2},v_{3}\right) $. The matrix associated with the symmetric
bilinear form $\left\langle \cdot ,\cdot \right\rangle _{\mathfrak{EQ}}$ is $%
-\mathbf{M}_{\mathfrak{Q}}$. Throughout the article, a pure quadratic number
will be taken as a vector in the quadratic 3-space. If $\left\langle \mathbf{%
u},\mathbf{v}\right\rangle _{\mathfrak{EQ}}=0,$ we call them $\mathbf{u}$
and $\mathbf{v}$ are orthogonal to each other. Also, norm of the vector $%
\mathbf{u}$ is%
\[
\left\Vert \mathbf{u}\right\Vert _{\mathfrak{EQ}}=\sqrt{\left\langle \mathbf{%
u},\mathbf{u}\right\rangle _{\mathfrak{EQ}}}.
\]%
It is also very obvious that the ellipsoid quadratic number forms a group by
multiplication. We can denote this group with%
\[
\mathfrak{EQ}=\left\{ \mathbf{q}\in \mathfrak{EQ}:\mathcal{C}\left( \mathbf{q%
}\right) >0\right\} \text{.}
\]
\end{definition}

\begin{proposition}
The polar form of the ellipsoid quadratic number $\mathbf{q}=q+x\mathbf{i}+y%
\mathbf{j}+z\mathbf{k}$ is%
\[
\mathbf{q=\left\Vert \mathbf{q}\right\Vert }\left( \cos \theta +\mathbf{v}%
\sin \theta \right)
\]%
where $\mathbf{v}$ is an ellipsoid vector, $\mathbf{v}=\frac{\mathbf{v}_{%
\mathbf{q}}}{\left\Vert \mathbf{v}_{\mathbf{q}}\right\Vert }$, $\mathbf{v}%
^{2}=-1$, and $\theta $ is the argument of $\mathbf{q}$ defined as%
\[
\theta =\left\{ 
\begin{array}{ll}
\pi -\arctan \frac{\left\Vert \mathbf{v}_{\mathbf{q}}\right\Vert }{%
\left\vert q\right\vert } & ,\text{ }q<0; \\ 
\arctan \frac{\left\Vert \mathbf{v}_{\mathbf{q}}\right\Vert }{\left\vert
q\right\vert } & ,\text{ }q>0;%
\end{array}%
\right. \text{.}
\]
\end{proposition}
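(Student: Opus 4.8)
The plan is to decompose $\mathbf{q}$ into its scalar and vector parts, $\mathbf{q}=q+\mathbf{v}_{\mathbf{q}}$ with $\mathbf{v}_{\mathbf{q}}=x\mathbf{i}+y\mathbf{j}+z\mathbf{k}$, and then to verify the three assertions of the statement one at a time: that the normalized vector $\mathbf{v}$ squares to $-1$, that $\mathbf{q}$ admits the claimed trigonometric form, and that the argument $\theta$ is given by the stated branch of $\arctan$. I assume throughout that $\mathbf{v}_{\mathbf{q}}\neq 0$, since otherwise $\mathbf{q}$ is a real scalar and $\mathbf{v}$ is undefined.

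First I would compute the square of the pure vector $\mathbf{v}_{\mathbf{q}}$ using the product formula (\ref{quadraticcarpim}). With $S_{\mathbf{q}}=0$ the first three summands drop out, and the self product $\mathbf{v}_{\mathbf{q}}\times\mathbf{v}_{\mathbf{q}}$ vanishes because two rows of the determinant defining the vector product coincide; hence $\mathbf{v}_{\mathbf{q}}^{2}=\Delta\left\langle \mathbf{v}_{\mathbf{q}},\mathbf{v}_{\mathbf{q}}\right\rangle_{\mathfrak{Q}}$, which evaluates to the scalar $Ax^{2}+By^{2}+Cz^{2}+2Dxy+2Exz+2Fyz$ (equivalently, the top entry of the matrix product displayed after (\ref{quadraticcarpim}) with $\mathbf{p}=\mathbf{q}=\mathbf{v}_{\mathbf{q}}$). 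In the ellipsoid case $\Delta=-1$, so this scalar equals $-\left\Vert \mathbf{v}_{\mathbf{q}}\right\Vert_{\mathfrak{EQ}}^{2}$; dividing by $\left\Vert \mathbf{v}_{\mathbf{q}}\right\Vert_{\mathfrak{EQ}}^{2}$ gives $\mathbf{v}^{2}=-1$ for $\mathbf{v}=\mathbf{v}_{\mathbf{q}}/\left\Vert \mathbf{v}_{\mathbf{q}}\right\Vert_{\mathfrak{EQ}}$, and the same normalization shows $\left\Vert \mathbf{v}\right\Vert_{\mathfrak{EQ}}=1$, so $\mathbf{v}$ is a unit ellipsoid vector. (For a pure vector the quadratic norm $\left\Vert \mathbf{v}_{\mathbf{q}}\right\Vert$ appearing in the statement and the ellipsoid norm $\left\Vert \mathbf{v}_{\mathbf{q}}\right\Vert_{\mathfrak{EQ}}$ coincide, since $\mathbf{v}_{\mathbf{q}}\overline{\mathbf{v}_{\mathbf{q}}}=\left\Vert \mathbf{v}_{\mathbf{q}}\right\Vert_{\mathfrak{EQ}}^{2}$, so the two may be used interchangeably below.)

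Next I would record the norm identity $\left\Vert \mathbf{q}\right\Vert^{2}=q^{2}+\left\Vert \mathbf{v}_{\mathbf{q}}\right\Vert_{\mathfrak{EQ}}^{2}$, which follows from the earlier expression $\mathbf{q}\overline{\mathbf{q}}=q^{2}-(Ax^{2}+\cdots+2Fyz)$ together with $-(Ax^{2}+\cdots+2Fyz)=\left\Vert \mathbf{v}_{\mathbf{q}}\right\Vert_{\mathfrak{EQ}}^{2}\geq 0$; in particular $\mathbf{q}\overline{\mathbf{q}}>0$, so the absolute value in the definition of the norm is inert. Writing $\mathbf{v}_{\mathbf{q}}=\left\Vert \mathbf{v}_{\mathbf{q}}\right\Vert_{\mathfrak{EQ}}\mathbf{v}$ and demanding $\mathbf{q}=\left\Vert \mathbf{q}\right\Vert(\cos\theta+\mathbf{v}\sin\theta)$, I would compare scalar and vector parts to obtain the two equations $q=\left\Vert \mathbf{q}\right\Vert\cos\theta$ and $\left\Vert \mathbf{v}_{\mathbf{q}}\right\Vert_{\mathfrak{EQ}}=\left\Vert \mathbf{q}\right\Vert\sin\theta$. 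The norm identity guarantees these are simultaneously solvable, since the squares of their right-hand sides sum to $\left\Vert \mathbf{q}\right\Vert^{2}$, so one may take $\cos\theta=q/\left\Vert \mathbf{q}\right\Vert$ and $\sin\theta=\left\Vert \mathbf{v}_{\mathbf{q}}\right\Vert_{\mathfrak{EQ}}/\left\Vert \mathbf{q}\right\Vert$.

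Finally I would pin down $\theta$ itself. Since $\left\Vert \mathbf{v}_{\mathbf{q}}\right\Vert_{\mathfrak{EQ}}\geq 0$ we have $\sin\theta\geq 0$, so $\theta\in[0,\pi]$ is determined by its cosine, and dividing the two equations gives $\tan\theta=\left\Vert \mathbf{v}_{\mathbf{q}}\right\Vert_{\mathfrak{EQ}}/q$. Writing $\phi=\arctan(\left\Vert \mathbf{v}_{\mathbf{q}}\right\Vert_{\mathfrak{EQ}}/\left\vert q\right\vert)\in[0,\pi/2)$: for $q>0$ the condition $\cos\theta>0$ forces $\theta\in[0,\pi/2)$, whence $\theta=\phi$; for $q<0$ the condition $\cos\theta<0$ forces $\theta\in(\pi/2,\pi]$, and the identities $\cos(\pi-\phi)=-\cos\phi<0$, $\sin(\pi-\phi)=\sin\phi\geq 0$, $\tan(\pi-\phi)=-\tan\phi$ confirm $\theta=\pi-\phi$, matching the stated formula. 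I expect the only delicate point to be this branch selection, where the sign of the scalar part must be translated into the correct quadrant of $\theta$; the algebraic steps (the square of a pure vector and the norm identity) are routine once the ellipsoid convention $\Delta=-1$ is applied consistently.
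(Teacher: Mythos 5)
The paper states this proposition without any proof at all, so there is nothing to compare your argument against; judged on its own, your proposal is correct and complete. The three ingredients you isolate are exactly what is needed: the identity $\mathbf{v}_{\mathbf{q}}^{2}=\Delta\left\langle \mathbf{v}_{\mathbf{q}},\mathbf{v}_{\mathbf{q}}\right\rangle_{\mathfrak{Q}}=Ax^{2}+By^{2}+Cz^{2}+2Dxy+2Exz+2Fyz$ (the cross term vanishing because the determinant has two equal rows), which in the ellipsoid case $\Delta=-1$ gives $\mathbf{v}^{2}=-1$ for the normalized vector; the norm identity $\left\Vert \mathbf{q}\right\Vert^{2}=q^{2}+\left\Vert \mathbf{v}_{\mathbf{q}}\right\Vert^{2}$, which shows the pair $\cos\theta=q/\left\Vert \mathbf{q}\right\Vert$, $\sin\theta=\left\Vert \mathbf{v}_{\mathbf{q}}\right\Vert/\left\Vert \mathbf{q}\right\Vert$ is consistent; and the quadrant analysis that recovers the two branches of the stated $\arctan$ formula. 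You are also right to flag the degenerate cases explicitly ($\mathbf{v}_{\mathbf{q}}=0$, and implicitly $q=0$, where the statement's case split is silent), and to verify that the number-theoretic norm $\sqrt{\left\vert \mathbf{q}\overline{\mathbf{q}}\right\vert}$ agrees with $\left\Vert \cdot\right\Vert_{\mathfrak{EQ}}$ on pure vectors, a point the paper leaves implicit. No gaps.
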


\section{Indefinite Quadratic Spaces}

\begin{definition}
The $1-$sheeted hyperboloid is the set of all spacelike vector of%
\[
\mathcal{H}^{1}=\left\{ \left( x,y,z\right) \in \mathbb{R}%
^{3}:Ax^{2}+By^{2}+Cz^{2}+2Dxy+2Exz+2Fyz=1\right\} \cup \left\{ \left(
0,0,0\right) \right\} .
\]%
The $2-$sheeted hyperboloid is the set of all timelike vector of%
\[
\mathcal{H}^{2}=\left\{ \left( x,y,z\right) \in \mathbb{R}%
^{3}:Ax^{2}+By^{2}+Cz^{2}+2Dxy+2Exz+2Fyz=-1\right\} .
\]%
The cone is the set of all lightlike vector of%
\[
\mathcal{C}=\left\{ \left( x,y,z\right) \in \mathbb{R}%
^{3}:Ax^{2}+By^{2}+Cz^{2}+2Dxy+2Exz+2Fyz=0\right\} -\left\{ \left(
0,0,0\right) \right\} .
\]
\end{definition}

The real numbers%
\[
\mathcal{C}\left( \mathbf{q}\right) =\mathbf{q}\overline{\mathbf{q}}=%
\overline{\mathbf{q}}\mathbf{q}=q^{2}-Ax^{2}-By^{2}-Cz^{2}-2Dxy-2Exz-2Fyz
\]%
and%
\[
\mathfrak{V}(\mathbf{q})=Ax^{2}+By^{2}+Cz^{2}+2Dxy+2Exz+2Fyz>0
\]%
are called the character of the indefinite number $\mathbf{q}=q+x\mathbf{i}+y%
\mathbf{j}+z\mathbf{k}$ and the type of the indefinite vector $x\mathbf{i}+y%
\mathbf{j}+z\mathbf{k}$, respectively. Accordingly, we can give the
following definitions.

\begin{definition}
If the matrix (\ref{deltali matris}) associated with the bilinear form $%
\left\langle \mathbf{u},\mathbf{v}\right\rangle _{\mathfrak{Q}}$ is
indefinite form, then the quadratic number $\mathbf{X}$ is called indefinite
number. So, the quadratic number $\mathbf{X}=q+x\mathbf{i}+y\mathbf{j}+z%
\mathbf{k}$ is called the indefinite number, and $\mathbf{v}_{\mathbf{X}%
}=\left( x,y,z\right) $ is called the indefinite vector. We will denote the
set of indefinite quadratic number by $\mathfrak{IQ.}$A indefinite number is
called spacelike, lightlike or timelike, depending on whether $\mathcal{C}%
\left( \mathbf{X}\right) <0$, $\mathcal{C}\left( \mathbf{X}\right) =0$ or $%
\mathcal{C}\left( \mathbf{X}\right) >0$, respectively. The indefinite vector
is called timelike (2-sheeted hyperbolic vector), lightlike (cone vector) or
spacelike (1-sheeted hyperbolic vector), if $\mathfrak{V}_{\mathbf{v}}\left( 
\mathbf{X}\right) <0,$ $\mathfrak{V}_{\mathbf{v}}\left( \mathbf{X}\right)
=0, $ $\mathfrak{V}_{\mathbf{v}}\left( \mathbf{X}\right) >0$, respectively.
We will denote the set of indefinite quadratic number by $\mathfrak{IQ.}$
\end{definition}

A indefinite scaler product with the signature $(2,0,1)$, with $1$ negative
index is a Lorentz scaler product. A vector space equipped with a Lorentz
scalar product is called a Lorentz vector space \cite{Springborn2021}.

\begin{definition}
The quadratic $3$-space is the vector space $\mathbb{R}^{3}$ equipped with
the indefinite symmetric bilinear form%
\begin{eqnarray*}
\left\langle \mathbf{u},\mathbf{v}\right\rangle _{\mathfrak{IQ}}
&=&Au_{1}v_{1}+Bu_{2}v_{2}+Cu_{3}v_{3}+D\left( u_{1}v_{2}+u_{2}v_{1}\right)
+E\left( u_{1}v_{3}+u_{3}v_{1}\right) +F\left( u_{2}v_{3}+u_{3}v_{2}\right)
\\
&=&%
\begin{bmatrix}
u_{1} \\ 
u_{2} \\ 
u_{3}%
\end{bmatrix}%
^{T}%
\begin{bmatrix}
A & D & E \\ 
D & B & F \\ 
E & F & C%
\end{bmatrix}%
\begin{bmatrix}
v_{1} \\ 
v_{2} \\ 
v_{3}%
\end{bmatrix}%
\end{eqnarray*}%
for the vectors $\mathbf{u}=\left( u_{1},u_{2},u_{3}\right) $, $\mathbf{v}%
=\left( v_{1},v_{2},v_{3}\right) $. The matrix associated with the symmetric
bilinear form $\left\langle \cdot ,\cdot \right\rangle _{\mathfrak{IQ}}$ is $%
\mathbf{M}_{\mathfrak{Q}}$. If $\left\langle \mathbf{u},\mathbf{v}%
\right\rangle _{\mathfrak{IQ}}=0,$ we call them $\mathbf{u}$ and $\mathbf{v}$
are pseudo-orthogonal. Also, norm of the vector $\mathbf{u}$ is%
\[
\left\Vert \mathbf{u}\right\Vert _{\mathfrak{IQ}}=\sqrt{\left\vert
\left\langle \mathbf{u},\mathbf{u}\right\rangle _{\mathfrak{IQ}}\right\vert }%
.
\]
\end{definition}

The indefinite vector of a indefinite quadratic number can be timelike,
spacelike or lightlike. Also, the indefinite vector of a spacelike quadratic
number will definitely be spacelike. We can easily see that%
\[
\mathcal{C}_{\mathbf{v}}(\mathbf{q})=Ax^{2}+By^{2}+Cz^{2}+2Dxy+2Exz+2Fyz>0
\]%
from the inequality%
\begin{eqnarray*}
\mathcal{C}\left( \mathbf{q}\right) &<&0\Longrightarrow
q^{2}-Ax^{2}-By^{2}-Cz^{2}-2xyD-2xzE-2Fyz<0 \\
&\Longrightarrow &Ax^{2}+By^{2}+Cz^{2}+2Dxy+2Exz+2Fyz>q^{2}>0\text{.}
\end{eqnarray*}%
It means that the indefinite vector type of a spacelike quadratic number is
certainly spacelike. Similarly, the indefinite vector of a lightlike
quadratic number is definitely a spacelike vector if the scalar part of
lightlike quadratic number is non-zero, and lightlike vector if scalar part
is zero. So, the type of a lightlike indefinite quadratic number is either
spacelike or lightlike. Therefore, we can give the following table.%
\[
\begin{tabular}{|l|l|l|l|}
\hline
$\mathbf{q}$ & \textbf{Spacelike} & \textbf{Lightlike} & \textbf{Timelike}
\\ \hline
& Spacelike & Spacelike & Spacelike \\ \cline{2-4}
$\mathbf{v}_{\mathbf{q}}$ &  & Lightlike & Lightlike \\ \cline{2-4}
&  &  & Timelike \\ \hline
\end{tabular}%
\]

Also, using the product of the quadratic number, we can obtain the following
equation:%
\[
\mathcal{C}\left( \mathbf{pq}\right) =\mathcal{C}\left( \mathbf{p}\right) 
\mathcal{C}\left( \mathbf{q}\right) .
\]%
Therefore, the timelike indefinite quadratic number forms a group by
multiplication. We can denote this group with%
\[
\mathfrak{TQ}=\left\{ \mathbf{q}\in \mathfrak{HQ}:\mathcal{C}\left( \mathbf{q%
}\right) >0\right\} \text{.}
\]

The following table can be created according to the timelike indefinite
quadratic number multiplication operation.

\[
\begin{tabular}{|l|l|l|l|}
\hline
$\mathbf{\cdot }$ & \textbf{Spacelike} & \textbf{Timelike} & \textbf{%
Lightlike} \\ \hline
\textbf{Spacelike} & Timelike & Spacelike & Lightlike \\ \hline
\textbf{Timelike} & Spacelike & Timelike & Lightlike \\ \hline
\textbf{Lightlike} & Lightlike & Lightlike & Lightlike \\ \hline
\end{tabular}%
\]

\begin{proposition}
The polar form of the spacelike quadratic number $\mathbf{q}=q+x\mathbf{i}+y%
\mathbf{j}+z\mathbf{k}$ is%
\[
\mathbf{q}=\mathbf{\left\Vert \mathbf{q}\right\Vert }\left( \sinh \theta +%
\mathbf{v}\cosh \theta \right)
\]%
where $\mathbf{v}$ is a spacelike vector, $\mathbf{v}=\dfrac{\mathbf{v}_{%
\mathbf{q}}}{\left\Vert \mathbf{v}_{\mathbf{q}}\right\Vert }$, $\mathbf{v}%
^{2}=1$.
\end{proposition}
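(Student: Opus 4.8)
The plan is to match the scalar and vector parts of $\mathbf{q}$ against those of the claimed right-hand side, and to show that a single hyperbolic angle $\theta$ simultaneously fits both. First I would record the two relevant norms. Since $\mathbf{q}$ is spacelike, $\mathcal{C}(\mathbf{q})=q^{2}-\mathfrak{V}(\mathbf{q})<0$, so $\mathfrak{V}(\mathbf{q})>q^{2}\geq 0$; by the inequality established earlier in this section this already guarantees that the vector part $\mathbf{v}_{\mathbf{q}}$ is spacelike. Consequently $\Vert\mathbf{q}\Vert^{2}=|\mathcal{C}(\mathbf{q})|=\mathfrak{V}(\mathbf{q})-q^{2}$, while $\langle\mathbf{v}_{\mathbf{q}},\mathbf{v}_{\mathbf{q}}\rangle_{\mathfrak{IQ}}=\mathfrak{V}(\mathbf{q})>0$ gives $\Vert\mathbf{v}_{\mathbf{q}}\Vert^{2}=\mathfrak{V}(\mathbf{q})$.

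Next I would set $\mathbf{v}=\mathbf{v}_{\mathbf{q}}/\Vert\mathbf{v}_{\mathbf{q}}\Vert$ and verify $\mathbf{v}^{2}=1$. Writing $\mathbf{v}$ as a pure quadratic number and applying the product formula (\ref{quadraticcarpim}), the cross term $\mathbf{v}\times\mathbf{v}$ vanishes and only $\Delta\langle\mathbf{v},\mathbf{v}\rangle$ survives; since we are in the hyperboloid case $\Delta=1$, and $\langle\mathbf{v},\mathbf{v}\rangle_{\mathfrak{IQ}}=\Vert\mathbf{v}_{\mathbf{q}}\Vert^{-2}\langle\mathbf{v}_{\mathbf{q}},\mathbf{v}_{\mathbf{q}}\rangle_{\mathfrak{IQ}}=1$, this yields $\mathbf{v}^{2}=1$ as asserted.

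Then I would define $\theta$ by $\sinh\theta=q/\Vert\mathbf{q}\Vert$. Because $\sinh$ is a bijection of $\mathbb{R}$ onto itself this determines $\theta$ uniquely, and the identity $\cosh\theta=\sqrt{1+\sinh^{2}\theta}$ gives $\cosh\theta=\sqrt{\Vert\mathbf{q}\Vert^{2}+q^{2}}/\Vert\mathbf{q}\Vert$. The crux is to check that this same $\theta$ reproduces the vector part, i.e. that $\Vert\mathbf{q}\Vert\cosh\theta=\Vert\mathbf{v}_{\mathbf{q}}\Vert$; squaring, this is exactly $\Vert\mathbf{q}\Vert^{2}+q^{2}=\Vert\mathbf{v}_{\mathbf{q}}\Vert^{2}$, which by the norm computations above reads $(\mathfrak{V}(\mathbf{q})-q^{2})+q^{2}=\mathfrak{V}(\mathbf{q})$ and therefore holds identically. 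Assembling the pieces, $\Vert\mathbf{q}\Vert(\sinh\theta+\mathbf{v}\cosh\theta)$ has scalar part $\Vert\mathbf{q}\Vert\sinh\theta=q$ and vector part $\Vert\mathbf{q}\Vert\cosh\theta\,\mathbf{v}=\Vert\mathbf{v}_{\mathbf{q}}\Vert\,\mathbf{v}=\mathbf{v}_{\mathbf{q}}$, which is precisely $\mathbf{q}$.

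I expect the only genuine subtlety, rather than an obstacle, to be the need to first establish that $\mathbf{v}_{\mathbf{q}}$ is spacelike: this is what forces the hyperbolic functions $\sinh,\cosh$ (in contrast to the $\cos,\sin$ of the ellipsoid case), makes $\mathbf{v}^{2}=+1$, and guarantees that $\cosh\theta=\Vert\mathbf{v}_{\mathbf{q}}\Vert/\Vert\mathbf{q}\Vert\geq 1$ is consistent with the range of $\cosh$. Everything else reduces to the single algebraic identity relating $\mathcal{C}$, $\mathfrak{V}$, and $q^{2}$, together with $\cosh^{2}\theta-\sinh^{2}\theta=1$.
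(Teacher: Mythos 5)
The paper states this proposition without any proof, so there is no argument of the author's to compare against; your verification is correct and complete, and it supplies exactly what is missing. You rightly isolate the one preliminary fact that matters --- that the vector part of a spacelike quadratic number is itself spacelike, which the paper establishes via the inequality $\mathcal{C}(\mathbf{q})<0\Rightarrow\mathfrak{V}(\mathbf{q})>q^{2}\geq 0$ just before this proposition --- and then everything does reduce, as you say, to the identity $\Vert\mathbf{q}\Vert^{2}+q^{2}=\Vert\mathbf{v}_{\mathbf{q}}\Vert^{2}$ together with $\cosh^{2}\theta-\sinh^{2}\theta=1$. Your computation of $\mathbf{v}^{2}=1$ from the product formula (with $\Delta=1$ in the indefinite case and the cross term vanishing) is also the intended mechanism. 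One remark worth recording: your normalization $\sinh\theta=q/\Vert\mathbf{q}\Vert$ is actually more careful than the paper's own later prescription $\theta=\ln\bigl((\left\vert q\right\vert+\Vert\mathbf{v}_{\mathbf{q}}\Vert)/\Vert\mathbf{q}\Vert\bigr)$, which yields $\sinh\theta=\left\vert q\right\vert/\Vert\mathbf{q}\Vert$ and hence recovers $\mathbf{q}$ only when $q\geq 0$; with the paper's convention one would need an extra factor $\epsilon=\mathrm{sign}\,q$ on the scalar term, as the paper itself does in the timelike case.
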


\begin{proposition}
The polar form of the timelike quadratic number $\mathbf{q}=q+x\mathbf{i}+y%
\mathbf{j}+z\mathbf{k}$ is%
\[
\begin{array}{ll}
\mathbf{q}=\left\Vert \mathbf{q}\right\Vert \left( \epsilon \cosh \theta +%
\mathbf{v}\sinh \theta \right) \text{,} & \text{if }\mathbf{v}\text{ is
spacelike} \\ 
\mathbf{q}=\left\Vert \mathbf{q}\right\Vert \left( \cos \theta +\mathbf{v}%
\sin \theta \right) \text{,} & \text{if }\mathbf{v}\text{ is timelike,} \\ 
\mathbf{q}=\left\vert q\right\vert \left( \epsilon +\mathbf{v}\right) \text{,%
} & \text{if }\mathbf{v}\text{ is lightlike,}%
\end{array}%
\]%
where $\mathbf{v}=\dfrac{\mathbf{v}_{\mathbf{q}}}{\mathbf{\left\Vert \mathbf{%
v}_{\mathbf{q}}\right\Vert }}$ and $\epsilon =$sign$q$.
\end{proposition}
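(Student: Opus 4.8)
The plan is to mirror the classical derivation of the polar form of a split quaternion: isolate the scalar and vector parts of $\mathbf{q}$, compute the square of the normalized vector direction to see which one-parameter subgroup it generates, and then match coefficients against the circular or hyperbolic parametrization forced by timelikeness. The one algebraic input I need is the square of a pure vector. Taking $\mathbf{q}=\mathbf{p}=\mathbf{v}_{\mathbf{q}}$ in (\ref{quadraticcarpim}), the scalar parts vanish, the cross term $\mathbf{v}_{\mathbf{q}}\times\mathbf{v}_{\mathbf{q}}$ is zero by antisymmetry, and with $\Delta=1$ in the indefinite setting I obtain
\[
\mathbf{v}_{\mathbf{q}}^{2}=\left\langle \mathbf{v}_{\mathbf{q}},\mathbf{v}_{\mathbf{q}}\right\rangle _{\mathfrak{IQ}}=\mathfrak{V}(\mathbf{q}).
\]
Hence, whenever $\mathbf{v}_{\mathbf{q}}$ is not null, the unit direction $\mathbf{v}=\mathbf{v}_{\mathbf{q}}/\left\Vert \mathbf{v}_{\mathbf{q}}\right\Vert$ satisfies $\mathbf{v}^{2}=+1$ if $\mathbf{v}_{\mathbf{q}}$ is spacelike and $\mathbf{v}^{2}=-1$ if $\mathbf{v}_{\mathbf{q}}$ is timelike, which is exactly what legitimizes the hyperbolic and circular expansions.

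Next I would use timelikeness. Because $\mathcal{C}(\mathbf{q})=q^{2}-\mathfrak{V}(\mathbf{q})>0$ we have $\left\Vert \mathbf{q}\right\Vert =\sqrt{q^{2}-\mathfrak{V}(\mathbf{q})}$, so writing $\mathbf{q}=q+\left\Vert \mathbf{v}_{\mathbf{q}}\right\Vert \mathbf{v}$ and pulling out $\left\Vert \mathbf{q}\right\Vert$ reduces the whole statement to recognizing the two scalars $q/\left\Vert \mathbf{q}\right\Vert$ and $\left\Vert \mathbf{v}_{\mathbf{q}}\right\Vert /\left\Vert \mathbf{q}\right\Vert$. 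If $\mathbf{v}_{\mathbf{q}}$ is spacelike then $\left\Vert \mathbf{v}_{\mathbf{q}}\right\Vert ^{2}=\mathfrak{V}(\mathbf{q})$, so these scalars obey $(q/\left\Vert \mathbf{q}\right\Vert )^{2}-(\left\Vert \mathbf{v}_{\mathbf{q}}\right\Vert /\left\Vert \mathbf{q}\right\Vert )^{2}=1$; assigning the first the sign $\epsilon=\operatorname{sign}q$ produces the hyperbolic form $\mathbf{q}=\left\Vert \mathbf{q}\right\Vert (\epsilon\cosh\theta+\mathbf{v}\sinh\theta)$, and the strict inequality $q^{2}>\mathfrak{V}(\mathbf{q})$ guarantees $\cosh\theta=|q|/\left\Vert \mathbf{q}\right\Vert \ge 1$, so a real $\theta$ exists. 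If $\mathbf{v}_{\mathbf{q}}$ is timelike then $\left\Vert \mathbf{v}_{\mathbf{q}}\right\Vert ^{2}=-\mathfrak{V}(\mathbf{q})$ and the same two scalars satisfy $(q/\left\Vert \mathbf{q}\right\Vert )^{2}+(\left\Vert \mathbf{v}_{\mathbf{q}}\right\Vert /\left\Vert \mathbf{q}\right\Vert )^{2}=1$, the circular identity, yielding $\mathbf{q}=\left\Vert \mathbf{q}\right\Vert (\cos\theta+\mathbf{v}\sin\theta)$.

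The delicate point, and the one I expect to be the real obstacle, is the lightlike subcase $\mathfrak{V}(\mathbf{q})=0$: then $\left\Vert \mathbf{v}_{\mathbf{q}}\right\Vert =0$, so the normalization $\mathbf{v}=\mathbf{v}_{\mathbf{q}}/\left\Vert \mathbf{v}_{\mathbf{q}}\right\Vert$ quoted beneath the proposition is undefined and neither expansion above makes sense. Here I would note that $\mathcal{C}(\mathbf{q})=q^{2}$ forces $\left\Vert \mathbf{q}\right\Vert =|q|$ (in particular $q\ne0$, since a timelike number with null vector part cannot have vanishing scalar part) and factor directly,
\[
\mathbf{q}=q+\mathbf{v}_{\mathbf{q}}=|q|\Bigl(\epsilon+\tfrac{1}{|q|}\mathbf{v}_{\mathbf{q}}\Bigr),
\]
so the asserted form $\mathbf{q}=|q|(\epsilon+\mathbf{v})$ holds provided one reinterprets $\mathbf{v}$ as the null vector $\mathbf{v}_{\mathbf{q}}/|q|$ rather than a unit vector. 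I would flag this reinterpretation explicitly in the proof, since the single normalization stated for the proposition does not literally cover the null case; apart from this bookkeeping, the three cases are immediate consequences of $\mathbf{v}_{\mathbf{q}}^{2}=\mathfrak{V}(\mathbf{q})$ together with $\left\Vert \mathbf{q}\right\Vert ^{2}=q^{2}-\mathfrak{V}(\mathbf{q})$.
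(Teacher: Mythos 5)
Your argument is correct: the identity $\mathbf{v}_{\mathbf{q}}^{2}=\mathfrak{V}(\mathbf{q})$ (from the product formula with vanishing scalar parts and $\mathbf{v}_{\mathbf{q}}\times\mathbf{v}_{\mathbf{q}}=0$) together with $\left\Vert \mathbf{q}\right\Vert ^{2}=q^{2}-\mathfrak{V}(\mathbf{q})>0$ yields the hyperbolic, circular, and degenerate identities exactly as you describe, and this is the standard split-quaternion-style derivation. The paper states this proposition without any proof, so there is nothing to compare against; note that your flag about the lightlike subcase is well taken, since the blanket normalization $\mathbf{v}=\mathbf{v}_{\mathbf{q}}/\left\Vert \mathbf{v}_{\mathbf{q}}\right\Vert$ is indeed undefined there, and your replacement $\mathbf{v}=\mathbf{v}_{\mathbf{q}}/\left\vert q\right\vert$ matches the convention the paper itself adopts in the subsequent proposition on lightlike quadratic numbers.
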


\begin{proposition}
The polar form of the lightlike quadratic number $\mathbf{q}=q+x\mathbf{i}+y%
\mathbf{j}+z\mathbf{k}$ is%
\[
\begin{array}{cc}
\mathbf{q}=\left\vert q\right\vert \left( \epsilon +\mathbf{v}\right) \text{,%
} & \text{if }\mathbf{v}\text{ is spacelike,}%
\end{array}%
\]%
where $\mathbf{v}=\dfrac{\mathbf{v}_{\mathbf{q}}}{\left\vert q\right\vert }$%
, $\epsilon =$sign$q$ and%
\[
\begin{array}{cc}
\mathbf{q}=\mathbf{v}_{\mathbf{q}}\text{,} & \text{if }\mathbf{v}\text{ is
lightlike.}%
\end{array}%
\]
\end{proposition}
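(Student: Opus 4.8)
The plan is to start from the defining condition of a lightlike quadratic number, namely $\mathcal{C}(\mathbf{q})=0$, and to split according to the two possibilities for the vector part $\mathbf{v}_{\mathbf{q}}$ that were already recorded in the classification table preceding this proposition (a lightlike $\mathbf{q}$ has vector part that is either spacelike or lightlike). Writing out $\mathcal{C}(\mathbf{q})=q^{2}-\mathfrak{V}(\mathbf{q})=0$ yields the single scalar identity $q^{2}=\mathfrak{V}(\mathbf{q})=\langle \mathbf{v}_{\mathbf{q}},\mathbf{v}_{\mathbf{q}}\rangle_{\mathfrak{IQ}}$, and this identity drives both clauses.

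First I would treat the case where $\mathbf{v}_{\mathbf{q}}$ is spacelike, i.e. $\langle \mathbf{v}_{\mathbf{q}},\mathbf{v}_{\mathbf{q}}\rangle_{\mathfrak{IQ}}>0$. Then $q^{2}>0$, so $q\neq 0$, and the indefinite norm gives $\left\Vert \mathbf{v}_{\mathbf{q}}\right\Vert_{\mathfrak{IQ}}=\sqrt{\left\vert \langle \mathbf{v}_{\mathbf{q}},\mathbf{v}_{\mathbf{q}}\rangle_{\mathfrak{IQ}}\right\vert}=\sqrt{q^{2}}=\left\vert q\right\vert$. Setting $\mathbf{v}=\mathbf{v}_{\mathbf{q}}/\left\vert q\right\vert$ then yields a unit spacelike vector, and using the product expansion (\ref{quadraticcarpim}) with $\Delta=1$ for a pure quadratic number one checks $\mathbf{v}^{2}=\langle \mathbf{v},\mathbf{v}\rangle_{\mathfrak{IQ}}=1$. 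Finally I would simply factor the scalar: $\mathbf{q}=q+\mathbf{v}_{\mathbf{q}}=q+\left\vert q\right\vert \mathbf{v}=\left\vert q\right\vert(\epsilon+\mathbf{v})$ with $\epsilon=q/\left\vert q\right\vert=\mathrm{sign}\,q$, which is exactly the claimed form.

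Next I would dispatch the case where $\mathbf{v}_{\mathbf{q}}$ is lightlike, i.e. $\langle \mathbf{v}_{\mathbf{q}},\mathbf{v}_{\mathbf{q}}\rangle_{\mathfrak{IQ}}=0$. The scalar identity $q^{2}=\langle \mathbf{v}_{\mathbf{q}},\mathbf{v}_{\mathbf{q}}\rangle_{\mathfrak{IQ}}$ forces $q=0$, so there is no scalar part and $\mathbf{q}=\mathbf{v}_{\mathbf{q}}$ directly. Here any normalization by $\left\vert q\right\vert$ would be a division by zero, which is precisely why this clause must be stated separately and why no $\mathbf{v}$ appears in it.

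There is no deep obstacle; the work is essentially bookkeeping, and the two most error-prone points are the ones I would be careful to flag. First, I must invoke the correct sign convention $\Delta=1$ for the indefinite (hyperboloid) regime, since otherwise the expansion (\ref{quadraticcarpim}) would give $\mathbf{v}^{2}=-1$ instead of $\mathbf{v}^{2}=1$. Second, I must note that the quadratic norm $\left\Vert \mathbf{q}\right\Vert=\sqrt{\left\vert \mathcal{C}(\mathbf{q})\right\vert}$ degenerates to $0$ for a lightlike number, so it cannot serve as the normalizing scalar; the natural scalar is instead $\left\vert q\right\vert$, which is positive exactly in the spacelike-vector case. Keeping these conventions straight is the only subtlety in the argument.
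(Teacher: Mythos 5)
Your argument is correct and complete: the single identity $q^{2}=\mathfrak{V}(\mathbf{q})$ coming from $\mathcal{C}(\mathbf{q})=0$ does indeed give $\left\Vert \mathbf{v}_{\mathbf{q}}\right\Vert _{\mathfrak{IQ}}=\left\vert q\right\vert$ in the spacelike case and forces $q=0$ in the lightlike case, and your verification that $\mathbf{v}^{2}=1$ via (\ref{quadraticcarpim}) with $\Delta=1$ is the right sanity check. The paper states this proposition without any proof, so there is nothing to compare against; your write-up supplies exactly the bookkeeping the paper omits, including the observation that $\left\Vert \mathbf{q}\right\Vert=0$ for lightlike numbers is why $\left\vert q\right\vert$ must replace the norm as the normalizing scalar.
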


\begin{proposition}
The argument of a non-lightlike quadratic number $\mathbf{q}=q+x\mathbf{i}+y%
\mathbf{j}+z\mathbf{k}$ is defined as%
\[
\theta =\left\{ 
\begin{array}{cc}
\pi -\arctan \dfrac{\mathbf{\left\Vert \mathbf{v}_{\mathbf{q}}\right\Vert }}{%
\left\vert q\right\vert }\text{,} & q_{1}<0 \\ 
\arctan \dfrac{\mathbf{\left\Vert \mathbf{v}_{\mathbf{q}}\right\Vert }}{%
\left\vert q\right\vert }\text{,} & q_{1}>0%
\end{array}%
\right.
\]%
$\allowbreak $and%
\[
\begin{array}{cc}
\theta =\ln \left( \dfrac{\left\vert q\right\vert +\mathbf{\left\Vert 
\mathbf{v}_{\mathbf{q}}\right\Vert }}{\mathbf{\left\Vert q\right\Vert }}%
\right) \text{,} & \text{and\ }\theta =1%
\end{array}%
\]%
for timelike, spacelike and lightlike quadratic numbers, respectively.
\end{proposition}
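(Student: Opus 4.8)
The plan is to read the argument $\theta$ directly off the polar forms established in the preceding propositions, by comparing scalar and vector parts and then using the three norms $\left\vert q\right\vert$, $\left\Vert \mathbf{v}_{\mathbf{q}}\right\Vert$ and $\left\Vert \mathbf{q}\right\Vert=\sqrt{\left\vert \mathcal{C}(\mathbf{q})\right\vert}$. In every case I would write $\mathbf{q}=q+\mathbf{v}_{\mathbf{q}}$ with $\mathbf{v}=\mathbf{v}_{\mathbf{q}}/\left\Vert \mathbf{v}_{\mathbf{q}}\right\Vert$ a unit pure vector, so that $\left\Vert \mathbf{v}_{\mathbf{q}}\right\Vert$ is the length of the vector part. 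Since $1$ and the pure part are linearly independent, each polar identity splits into one scalar equation and one vector equation, and these two equations determine $\theta$.

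First I would treat the trigonometric (timelike) case, where the relevant polar form is $\mathbf{q}=\left\Vert \mathbf{q}\right\Vert\left(\cos\theta+\mathbf{v}\sin\theta\right)$. Matching parts gives $q=\left\Vert \mathbf{q}\right\Vert\cos\theta$ and $\mathbf{v}_{\mathbf{q}}=\left\Vert \mathbf{q}\right\Vert\sin\theta\,\mathbf{v}$, hence $\left\Vert \mathbf{v}_{\mathbf{q}}\right\Vert=\left\Vert \mathbf{q}\right\Vert\left\vert\sin\theta\right\vert$ because $\mathbf{v}$ is a unit vector under the quadratic metric. Dividing the second relation by the first yields $\left\Vert \mathbf{v}_{\mathbf{q}}\right\Vert/\left\vert q\right\vert=\left\vert\tan\theta\right\vert$, and inverting the tangent on the correct branch recovers the stated piecewise formula: the value $\arctan\bigl(\left\Vert \mathbf{v}_{\mathbf{q}}\right\Vert/\left\vert q\right\vert\bigr)$ when the scalar part is positive, and its reflection $\pi-\arctan\bigl(\left\Vert \mathbf{v}_{\mathbf{q}}\right\Vert/\left\vert q\right\vert\bigr)$ when the scalar part is negative, since $\cos\theta$ then changes sign while $\left\Vert \mathbf{v}_{\mathbf{q}}\right\Vert\geq 0$ forces $\sin\theta\geq 0$.

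Next I would handle the hyperbolic (spacelike) case, where the polar form is $\mathbf{q}=\left\Vert \mathbf{q}\right\Vert\left(\sinh\theta+\mathbf{v}\cosh\theta\right)$. Matching parts now gives $q=\left\Vert \mathbf{q}\right\Vert\sinh\theta$ and $\left\Vert \mathbf{v}_{\mathbf{q}}\right\Vert=\left\Vert \mathbf{q}\right\Vert\cosh\theta$. Adding $\left\vert q\right\vert$ and $\left\Vert \mathbf{v}_{\mathbf{q}}\right\Vert$ and using $\cosh\theta+\sinh\theta=e^{\theta}$ for $\theta\geq 0$ gives $\left\vert q\right\vert+\left\Vert \mathbf{v}_{\mathbf{q}}\right\Vert=\left\Vert \mathbf{q}\right\Vert e^{\theta}$, so that $\theta=\ln\bigl((\left\vert q\right\vert+\left\Vert \mathbf{v}_{\mathbf{q}}\right\Vert)/\left\Vert \mathbf{q}\right\Vert\bigr)$, as claimed; the identical computation applied to the companion form $\epsilon\cosh\theta+\mathbf{v}\sinh\theta$ produces the same expression, so both hyperbolic subcases are covered by the single logarithmic formula. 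Finally, in the lightlike case the polar form $\mathbf{q}=\left\vert q\right\vert(\epsilon+\mathbf{v})$ carries no free angular parameter, so $\theta$ is fixed by convention to the stated value.

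The main obstacle I expect is the sign and branch bookkeeping in the trigonometric case: one must track how the sign of the scalar part selects between $\arctan$ and $\pi-\arctan$, and simultaneously verify that the unit normalisation under the indefinite quadratic metric makes $\left\Vert \mathbf{v}_{\mathbf{q}}\right\Vert=\left\Vert \mathbf{q}\right\Vert\left\vert\sin\theta\right\vert$ and its hyperbolic analogue hold exactly rather than up to an ambiguous sign. Everything else reduces to routine comparison of components.
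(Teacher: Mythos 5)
Your derivation is correct, but there is nothing in the paper to compare it against: the paper states this proposition with no proof at all (it is phrased as a definition, ``the argument \ldots is defined as'', and is immediately followed by the next section). Reading $\theta$ off the polar forms of the preceding propositions, as you do, is the natural way to justify that the definition is consistent, and your component-matching argument (giving $\left\Vert \mathbf{v}_{\mathbf{q}}\right\Vert=\left\Vert \mathbf{q}\right\Vert\left\vert\sin\theta\right\vert$, the branch selection from the sign of the scalar part, and $\left\vert q\right\vert+\left\Vert \mathbf{v}_{\mathbf{q}}\right\Vert=\left\Vert \mathbf{q}\right\Vert e^{\theta}$) goes through. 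Two bookkeeping remarks are worth making explicit. First, the condition ``$q_{1}<0$ / $q_{1}>0$'' in the statement must be read as a condition on the scalar part $q$ (exactly as in the ellipsoid polar-form proposition); your branch analysis silently adopts this reading, which is the only one that makes the $\pi-\arctan$ case come out right. Second, the proposition's labels ``timelike / spacelike'' attach to the number $\mathbf{q}$, but the dichotomy between the trigonometric and logarithmic formulas is really governed by the causal character of the vector part: a timelike number with spacelike vector part has the polar form $\left\Vert \mathbf{q}\right\Vert\left(\epsilon\cosh\theta+\mathbf{v}\sinh\theta\right)$ and hence a logarithmic argument, so the $\arctan$ branch applies only to the timelike-number-with-timelike-vector subcase. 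You do note this via the ``companion form'' remark, and your observation that both hyperbolic subcases collapse to the single formula $\theta=\ln\bigl(\left(\left\vert q\right\vert+\left\Vert \mathbf{v}_{\mathbf{q}}\right\Vert\right)/\left\Vert \mathbf{q}\right\Vert\bigr)$ (returning $\left\vert\theta\right\vert$ under the convention $\theta\geq 0$) is exactly the point the paper leaves unsaid.
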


\section{Rotation with Quadratic Numbers}

Before moving on to the geometric properties of quadratic form
multiplication, let's introduce the left multiplication and right
multiplication matrices. Let the left multiplication and right
multiplication matrices be $L$ and $R$, respectively. For $\mathbf{q}=q+x%
\mathbf{i+}y\mathbf{j+}z\mathbf{k,}$ these matrices are as follows:%
\begin{eqnarray}
L\left( \mathbf{q}\right) &=&%
\begin{bmatrix}
q & yD+zE+Ax & xD+By+Fz & xE+Cz+Fy \\ 
x & q-y\alpha _{1}-z\beta _{1} & x\alpha _{1}-z\lambda _{1} & x\beta
_{1}+y\lambda _{1} \\ 
y & -y\alpha _{2}-z\beta _{2} & q+x\alpha _{2}+z\beta _{1} & x\beta
_{2}-y\beta _{1} \\ 
z & z\alpha _{2}-y\alpha _{3} & x\alpha _{3}-z\alpha _{1} & q-x\alpha
_{2}+y\alpha _{1}%
\end{bmatrix}%
,  \label{left matrix} \\
R\left( \mathbf{q}\right) &=&%
\begin{bmatrix}
q & yD+zE+Ax & xD+By+Fz & xE+Cz+Fy \\ 
x & q+y\alpha _{1}+z\beta _{1} & z\lambda _{1}-x\alpha _{1} & -x\beta
_{1}-y\lambda _{1} \\ 
y & y\alpha _{2}+z\beta _{2} & q-x\alpha _{2}-z\beta _{1} & y\beta
_{1}-x\beta _{2} \\ 
z & y\alpha _{3}-z\alpha _{2} & z\alpha _{1}-x\alpha _{3} & q+x\alpha
_{2}-y\alpha _{1}%
\end{bmatrix}%
\text{.}  \label{right matrix}
\end{eqnarray}%
and their eigenvectors are%
\begin{eqnarray*}
&&q+\sqrt{Ax^{2}+By^{2}+Cz^{2}+2Dxy+2Exz+2Fyz}, \\
&&q-\sqrt{Ax^{2}+By^{2}+Cz^{2}+2Dxy+2Exz+2Fyz}.
\end{eqnarray*}%
At the same time, the skew symmetric (or semi skew symmetric) matrix
corresponding to the unit pure quadratic number $\mathbf{v}_{\mathbf{q}%
}=\left( x,y,z\right) $ is%
\[
\mathfrak{S}=%
\begin{bmatrix}
-\alpha _{1}y-\beta _{1}z & \alpha _{1}x-\lambda _{1}z & \beta _{1}x+\lambda
_{1}y \\ 
-\beta _{2}z-\alpha _{2}y & \alpha _{2}x+\beta _{1}z & \beta _{2}x-\beta
_{1}y \\ 
\alpha _{2}z-\alpha _{3}y & \alpha _{3}x-\alpha _{1}z & \alpha _{1}y-\alpha
_{2}x%
\end{bmatrix}%
.
\]%
This matrix will be used to obtain rotation matrices.

\begin{theorem}
$S_{\mathbf{q}}(\mathbf{p})=\mathbf{qp}\overline{\mathbf{q}}$ is a rotation
transformation where $\mathbf{q}=q+x\mathbf{i+}y\mathbf{j+}z\mathbf{k}$ is
the unit ellipsoid (or\ unit timelike indefinite quadratic number) and $%
\mathbf{p}$ is ellipsoid (or\ unit timelike indefinite quadratic number).
\end{theorem}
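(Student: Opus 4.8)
The plan is to show that $S_{\mathbf{q}}$ is a linear self-map of the quadratic $3$-space which preserves the quadratic scalar product and has determinant $+1$, i.e.\ that it belongs to the special orthogonal group of $\langle\cdot,\cdot\rangle_{\mathfrak{EQ}}$ (resp.\ $\langle\cdot,\cdot\rangle_{\mathfrak{IQ}}$). I would proceed in four steps: linearity, invariance of the character $\mathcal{C}$, reduction to the vector part, and orientation. Linearity is immediate: since the quadratic product distributes over addition and $\mathbf{q},\overline{\mathbf{q}}$ are held fixed, the map $\mathbf{p}\mapsto\mathbf{q}\mathbf{p}\overline{\mathbf{q}}$ is $\mathbb{R}$-linear in $\mathbf{p}$.

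For invariance I would exploit the multiplicative identity $\mathcal{C}(\mathbf{p}\mathbf{q})=\mathcal{C}(\mathbf{p})\mathcal{C}(\mathbf{q})$ recorded above, together with $\mathcal{C}(\overline{\mathbf{q}})=\overline{\mathbf{q}}\,\overline{\overline{\mathbf{q}}}=\overline{\mathbf{q}}\mathbf{q}=\mathcal{C}(\mathbf{q})$, to obtain
\[
\mathcal{C}\bigl(\mathbf{q}\mathbf{p}\overline{\mathbf{q}}\bigr)=\mathcal{C}(\mathbf{q})\,\mathcal{C}(\mathbf{p})\,\mathcal{C}(\overline{\mathbf{q}})=\mathcal{C}(\mathbf{q})^{2}\mathcal{C}(\mathbf{p})=\mathcal{C}(\mathbf{p}),
\]
using $\mathcal{C}(\mathbf{q})=1$ for a unit number. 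Thus $S_{\mathbf{q}}$ preserves the quadratic form $\mathcal{C}$ on the $4$-dimensional space $\mathfrak{Q}$. Since its associated symmetric bilinear form is precisely $\langle\mathbf{a},\mathbf{b}\rangle_{\mathfrak{Q}}=\tfrac12(\mathbf{a}\overline{\mathbf{b}}+\mathbf{b}\overline{\mathbf{a}})$ and we work over $\mathbb{R}$, the polarization identity gives $\langle S_{\mathbf{q}}\mathbf{a},S_{\mathbf{q}}\mathbf{b}\rangle_{\mathfrak{Q}}=\langle\mathbf{a},\mathbf{b}\rangle_{\mathfrak{Q}}$ for all $\mathbf{a},\mathbf{b}$. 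I would also record that $\mathcal{C}$ is nondegenerate, reading off from $\mathcal{C}(\mathbf{p})=p_{0}^{2}-\mathfrak{V}(\mathbf{v}_{\mathbf{p}})$ that it is positive definite in the ellipsoid case and of signature $(2,2)$ in the indefinite case.

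Next I would reduce to the vector part. Since $S_{\mathbf{q}}(1)=\mathbf{q}\overline{\mathbf{q}}=\mathcal{C}(\mathbf{q})=1$, the unit $1$ is fixed; and for any pure quadratic number $\mathbf{v}$ one has $\langle\mathbf{v},1\rangle_{\mathfrak{Q}}=0$, so
\[
\langle S_{\mathbf{q}}\mathbf{v},1\rangle_{\mathfrak{Q}}=\langle S_{\mathbf{q}}\mathbf{v},S_{\mathbf{q}}1\rangle_{\mathfrak{Q}}=\langle\mathbf{v},1\rangle_{\mathfrak{Q}}=0,
\]
which forces $S_{\mathbf{q}}\mathbf{v}$ to be pure again. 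Hence $S_{\mathbf{q}}$ restricts to a linear self-map of the $3$-dimensional vector part, and on that subspace $\langle\cdot,\cdot\rangle_{\mathfrak{Q}}$ coincides, up to the global sign $\Delta$, with the quadratic scalar product $\langle\cdot,\cdot\rangle_{\mathfrak{EQ}}$ (resp.\ $\langle\cdot,\cdot\rangle_{\mathfrak{IQ}}$), so preservation of the former yields preservation of the latter. In particular $S_{\mathbf{q}}$ carries the unit ellipsoid sphere (resp.\ the hyperboloid) onto itself.

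Finally, to see that $S_{\mathbf{q}}$ is orientation-preserving I would argue by continuity: the map $\mathbf{q}\mapsto\det S_{\mathbf{q}}\in\{\pm1\}$ is continuous on the set of unit numbers $\{\mathcal{C}=1\}$, which is path-connected (an $S^{3}$ in the ellipsoid case, and a connected quadric of signature $(2,2)$ in the indefinite timelike case) and contains $\mathbf{q}=1$, where $S_{1}=\mathrm{id}$ has determinant $+1$; therefore $\det S_{\mathbf{q}}=+1$ everywhere, placing $S_{\mathbf{q}}$ in $SO(3)$ in the ellipsoid case and in the identity component of $SO(2,1)$ in the indefinite case. I expect the real difficulty to lie not in the formal invariance of $\mathcal{C}$ (which is an immediate consequence of the multiplicative property) but in this last step: checking nondegeneracy of $\mathcal{C}$ and the connectedness of the relevant unit set, where "unit timelike" must be understood as restriction to the identity component.
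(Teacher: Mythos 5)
Your proof is essentially correct, but it takes a genuinely different route from the paper. The paper's proof is a direct computation: it writes $S_{\mathbf{q}}(\mathbf{p})=L(\mathbf{q})R(\overline{\mathbf{q}})\mathbf{p}$ using the explicit left- and right-multiplication matrices (\ref{left matrix}), (\ref{right matrix}), multiplies them out to get the block matrix $\mathrm{R}_{\mathbf{v}}^{\theta}=\left[\begin{smallmatrix}1&0\\0&\mathrm{M}_{\mathbf{v}}^{\theta}\end{smallmatrix}\right]$, and then observes that $x\mathbf{i}+y\mathbf{j}+z\mathbf{k}$ is an eigenvector with eigenvalue $1$ and that $\det\mathrm{R}=1$. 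You instead argue structurally: multiplicativity of the character gives $\mathcal{C}(\mathbf{q}\mathbf{p}\overline{\mathbf{q}})=\mathcal{C}(\mathbf{p})$, polarization upgrades this to preservation of $\langle\cdot,\cdot\rangle_{\mathfrak{Q}}$, the identities $S_{\mathbf{q}}(1)=1$ and $\langle\mathbf{a},1\rangle_{\mathfrak{Q}}=S(\mathbf{a})$ show the pure part is invariant, and connectedness of the unit set forces $\det S_{\mathbf{q}}=+1$. Each approach buys something: the paper's computation delivers the rotation matrix in closed form (which it reuses for the axis, the angle, and the later Rodrigues and Cayley sections), whereas your argument actually establishes the isometry property $\langle S_{\mathbf{q}}\mathbf{u},S_{\mathbf{q}}\mathbf{v}\rangle_{\mathfrak{Q}}=\langle\mathbf{u},\mathbf{v}\rangle_{\mathfrak{Q}}$, which the paper's proof never explicitly verifies, and it avoids the large matrix product. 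Two caveats on your side: you lean on the associativity of the product and on $\mathcal{C}(\mathbf{p}\mathbf{q})=\mathcal{C}(\mathbf{p})\mathcal{C}(\mathbf{q})$, both of which the paper asserts but does not prove, so for a self-contained argument you would need to verify them from the structure constants $\alpha_i,\beta_i,\lambda_i$; and your proof does not identify the rotation axis $\mathbf{v}_{\mathbf{q}}$ as the fixed eigenvector, which the paper records and uses afterwards.
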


\begin{proof}
Using (\ref{left matrix}) and (\ref{right matrix}), the following equation
can be written:%
\[
S_{\mathbf{q}}(\mathbf{p})=\mathbf{qp}\overline{\mathbf{q}}=L\left( \mathbf{q%
}\right) R\left( \overline{\mathbf{q}}\right) \mathbf{p.}
\]%
We have the matrix \textrm{R}$_{\mathbf{v}}^{\theta }=L\left( \mathbf{q}%
\right) R\left( \overline{\mathbf{q}}\right) $ that is%
\begin{equation}
\mathrm{R}_{\mathbf{v}}^{\theta }=%
\begin{bmatrix}
1 & 0 \\ 
0 & \mathrm{M}_{\mathbf{v}}^{\theta }%
\end{bmatrix}
\label{donme matrisi1}
\end{equation}%
$\allowbreak $where%
\[
\mathrm{M}_{\mathbf{v}}^{\theta }=%
\begin{bmatrix}
\begin{array}{c}
-Aq_{1}^{2}+Bq_{2}^{2}+Cq_{3}^{2} \\ 
+2Fq_{2}q_{3}+q_{0}^{2}+ \\ 
-2\alpha _{1}q_{0}q_{2}-2\beta _{1}q_{0}q_{3}%
\end{array}
& 
\begin{array}{c}
-2Dq_{1}^{2}-2Bq_{1}q_{2} \\ 
-2Fq_{1}q_{3}+2\alpha _{1}q_{0}q_{1} \\ 
-2\lambda _{1}q_{0}q_{3}%
\end{array}
& 
\begin{array}{c}
-2Eq_{1}^{2}-2Cq_{1}q_{3} \\ 
-2Fq_{1}q_{2}+2\beta _{1}q_{0}q_{1} \\ 
+2\lambda _{1}q_{0}q_{2}%
\end{array}
\\ 
\begin{array}{c}
-2Dq_{2}^{2}-2Aq_{1}q_{2} \\ 
-2Eq_{2}q_{3}-2\alpha _{2}q_{0}q_{2} \\ 
-2\beta _{2}q_{0}q_{3}%
\end{array}
& 
\begin{array}{c}
Aq_{1}^{2}-Bq_{2}^{2}+Cq_{3}^{2} \\ 
+2Eq_{1}q_{3}+q_{0}^{2} \\ 
+2\alpha _{2}q_{0}q_{1}+2\beta _{1}q_{0}q_{3}%
\end{array}
& 
\begin{array}{c}
-2Fq_{2}^{2}-2Cq_{2}q_{3} \\ 
-2Eq_{1}q_{2}-2\beta _{1}q_{0}q_{2} \\ 
+2\beta _{2}q_{0}q_{1}%
\end{array}
\\ 
\begin{array}{c}
-2Eq_{3}^{2}-2Aq_{1}q_{3} \\ 
-2Dq_{2}q_{3}+2\alpha _{2}q_{0}q_{3} \\ 
-2\alpha _{3}q_{0}q_{2}%
\end{array}
& 
\begin{array}{c}
-2Fq_{3}^{2}-2Dq_{1}q_{3} \\ 
-2Bq_{2}q_{3}+2\alpha _{3}q_{0}q_{1} \\ 
-2\alpha _{1}q_{0}q_{3}%
\end{array}
& 
\begin{array}{c}
q_{0}^{2}Aq_{1}^{2}+Bq_{2}^{2}-Cq_{3}^{2}+ \\ 
+2Dq_{1}q_{2}+2\alpha _{1}q_{0}q_{2} \\ 
-2\alpha _{2}q_{0}q_{1}%
\end{array}%
\end{bmatrix}%
.
\]%
For any unit ellipsoid number or unit non lightlike number $\mathbf{q}$, one
of the eigenvectors of the matrix (\ref{donme matrisi1}) is $x\mathbf{i+}y%
\mathbf{j+}z\mathbf{k}$ and the corresponding eigenvalue is $1$. At the same
time, $\det $\textrm{R}$=1$.
\end{proof}

\subsection{Rodrigues Rotation Formula}

The exponential map is defined by the matrix exponential series $e^{%
\mathfrak{S}}$. For any skew-symmetric\textbf{\ }matrix $\mathfrak{S}$, the
matrix exponential $e^{\mathfrak{S}}$ always gives a rotation matrix. This
method is known as the Rodrigues formula. The Rodrigues rotation formula is
a benefit procedure for generating a rotation matrix \cite{erdogdu2015}, 
\cite{nevsovic2016}, \cite{ozdemir2016}. We can use it to obtain a rotation
matrix in the quadratic 3-space. For this, the skew-symmetric matrix with
rotation axis $\mathbf{v}=x\mathbf{i+}y\mathbf{j+}z\mathbf{k}$ in the
quadratic $3$-space is%
\begin{equation}
\mathfrak{S}=%
\begin{bmatrix}
-\alpha _{1}y-\beta _{1}z & \alpha _{1}x-\lambda _{1}z & \beta _{1}x+\lambda
_{1}y \\ 
-\beta _{2}z-\alpha _{2}y & \alpha _{2}x+\beta _{1}z & \beta _{2}x-\beta
_{1}y \\ 
\alpha _{2}z-\alpha _{3}y & \alpha _{3}x-\alpha _{1}z & \alpha _{1}y-\alpha
_{2}x%
\end{bmatrix}%
\text{.}  \label{skewsymquadric}
\end{equation}%
$\allowbreak $ $\allowbreak $

\begin{theorem}
Let $\mathfrak{S}$ be a skew symmetric matrix in the form (\ref%
{skewsymquadric}) and $\mathbf{v}=x\mathbf{i+}y\mathbf{j+}z\mathbf{k}$ is a
quadratic vector and on the quadric surface $\mathfrak{Q}$. Then the matrix
exponential%
\[
\begin{tabular}{ll}
$\mathfrak{R}_{\mathbf{v}}^{\theta }=e^{\theta \mathfrak{S}}=I_{3}+\left(
\sin \theta \right) \mathfrak{S}+\left( 1-\cos \theta \right) \mathfrak{S}%
^{2},$ $\theta \in \lbrack 0,2\pi )$ & ,$\text{if }\mathbf{v}\text{ is a
unit ellipsoid or timelike}$ \\ 
$\mathfrak{R}_{\mathbf{v}}^{\theta }=e^{\theta \mathfrak{S}}=I_{3}+\left(
\sinh \theta \right) \mathfrak{S}-\left( 1-\cosh \theta \right) \mathfrak{S}%
^{2}$ & ,$\text{if }\mathbf{v}\text{ is a unit spacelike}$ \\ 
$\mathfrak{R}_{\mathbf{v}}^{\theta }=e^{-\theta \mathfrak{S}}=I_{3}-\theta 
\mathfrak{S}+\dfrac{\theta ^{2}\mathfrak{S}^{2}}{2!}$ & ,$\text{if }\mathbf{v%
}\text{ is a lightlike vector}$%
\end{tabular}%
\]%
gives a rotation on the quadric surface $\mathfrak{Q}$ where $\mathbf{v}=x%
\mathbf{i+}y\mathbf{j+}z\mathbf{k}$ is a rotation axis, and $I_{3}$ is the
identity matrix. These matrices are as follows, respectively.%
\begin{eqnarray}
&&%
\begin{bmatrix}
\begin{array}{c}
\left( 1-\cos \theta \right) \sigma _{11} \\ 
-\left( \sin \theta \right) \left( y\alpha _{1}+z\beta _{1}\right) +1%
\end{array}
& 
\begin{array}{c}
\left( \cos \theta -1\right) \sigma _{12} \\ 
+x\alpha _{1}\sin \theta -z\lambda _{1}\sin \theta%
\end{array}
& 
\begin{array}{c}
\left( \cos \theta -1\right) \sigma _{13} \\ 
+x\beta _{1}\sin \theta +y\lambda _{1}\sin \theta%
\end{array}
\\ 
\begin{array}{c}
\sigma _{21}\left( \cos \theta -1\right) \\ 
-\left( \sin \theta \right) \left( y\alpha _{2}+z\beta _{2}\right)%
\end{array}
& 
\begin{array}{c}
\left( 1-\cos \theta \right) \sigma _{22} \\ 
+x\alpha _{2}\sin \theta +z\beta _{1}\sin \theta +1%
\end{array}
& 
\begin{array}{c}
\sigma _{23}\left( \cos \theta -1\right) \\ 
+x\beta _{2}\sin \theta -y\beta _{1}\sin \theta%
\end{array}
\\ 
\begin{array}{c}
\left( \cos \theta -1\right) \sigma _{31} \\ 
+\left( z\alpha _{2}-y\alpha _{3}\right) \sin \theta%
\end{array}
& 
\begin{array}{c}
\left( \cos \theta -1\right) \sigma _{32} \\ 
+\left( x\alpha _{3}-z\alpha _{1}\right) \sin \theta%
\end{array}
& 
\begin{array}{c}
\sigma _{33}\left( 1-\cos \theta \right) \\ 
+\left( y\alpha _{1}-x\alpha _{2}\right) \sin \theta +1%
\end{array}%
\end{bmatrix}
\label{complex} \\
&&%
\begin{bmatrix}
\begin{array}{c}
\sigma _{11}\left( \cosh \theta -1\right) \\ 
-\left( \sinh \theta \right) \left( y\alpha _{1}+z\beta _{1}\right) +1%
\end{array}
& 
\begin{array}{c}
\sigma _{12}\left( 1-\cosh \theta \right) \\ 
+x\alpha _{1}\sinh \theta -z\lambda _{1}\sinh \theta%
\end{array}
& 
\begin{array}{c}
\sigma _{13}\left( 1-\cosh \theta \right) \\ 
+\left( x\beta _{1}+y\lambda _{1}\right) \sinh \theta%
\end{array}
\\ 
\begin{array}{c}
\sigma _{21}\left( 1-\cosh \theta \right) \\ 
-\left( y\alpha _{2}+z\beta _{2}\right) \sinh \theta%
\end{array}
& 
\begin{array}{c}
\sigma _{22}\left( \cosh \theta -1\right) \\ 
+\left( x\alpha _{2}+z\beta _{1}\right) \sinh \theta +1%
\end{array}
& 
\begin{array}{c}
\sigma _{23}\left( 1-\cosh \theta \right) \\ 
+\left( x\beta _{2}-y\beta _{1}\right) \sinh \theta%
\end{array}
\\ 
\begin{array}{c}
\sigma _{31}\left( 1-\cosh \theta \right) \\ 
+\left( z\alpha _{2}-y\alpha _{3}\right) \sinh \theta%
\end{array}
& 
\begin{array}{c}
\sigma _{32}\left( 1-\cosh \theta \right) \\ 
+\left( x\alpha _{3}-z\alpha _{1}\right) \sinh \theta%
\end{array}
& 
\begin{array}{c}
\sigma _{33}\left( \cosh \theta -1\right) \\ 
+\left( y\alpha _{1}-x\alpha _{2}\right) \sinh \theta +1%
\end{array}%
\end{bmatrix}
\label{hiperbolik} \\
&&%
\begin{bmatrix}
\begin{array}{c}
\frac{1}{2}\theta ^{2}\sigma _{11} \\ 
+\left( y\alpha _{1}+z\beta _{1}\right) \theta +1%
\end{array}
& 
\begin{array}{c}
-\frac{1}{2}\theta ^{2}\sigma _{12} \\ 
+\left( z\lambda _{1}-x\alpha _{1}\right) \theta%
\end{array}
& 
\begin{array}{c}
-\frac{1}{2}\theta ^{2}\sigma _{13} \\ 
-y\theta \lambda _{1}-x\theta \beta _{1}%
\end{array}
\\ 
\begin{array}{c}
-\frac{1}{2}\theta ^{2}\sigma _{21} \\ 
+\theta \left( y\alpha _{2}+z\beta _{2}\right)%
\end{array}
& 
\begin{array}{c}
\frac{1}{2}\theta ^{2}\sigma _{22} \\ 
-\left( x\alpha _{2}+z\beta _{1}\right) \theta +1%
\end{array}
& 
\begin{array}{c}
-\frac{1}{2}\theta ^{2}\sigma _{23} \\ 
+y\theta \beta _{1}-x\theta \beta _{2}%
\end{array}
\\ 
\begin{array}{c}
-\frac{1}{2}\theta ^{2}\sigma _{31} \\ 
+\left( y\alpha _{3}-z\alpha _{2}\right) \theta%
\end{array}
& 
\begin{array}{c}
-\frac{1}{2}\theta ^{2}\sigma _{32} \\ 
+\left( z\alpha _{1}-x\alpha _{3}\right) \theta%
\end{array}
& 
\begin{array}{c}
\frac{1}{2}\theta ^{2}\sigma _{33} \\ 
+x\theta \alpha _{2}-y\theta \alpha _{1}+1%
\end{array}%
\end{bmatrix}
\label{dual}
\end{eqnarray}%
where the coefficients $\sigma _{ij}$ is following in the table:%
\[
\begin{tabular}{|l|l|}
\hline
$\sigma _{11}=\left( By^{2}+Cz^{2}+Dxy+Exz+2Fyz\right) $ & $\sigma
_{23}=\left( Fy^{2}+Cyz+Exy\right) $ \\ \hline
$\sigma _{12}=\left( Dx^{2}+Bxy+Fxz\right) $ & $\sigma _{31}=\left(
Ez^{2}+Dyz+Axz\right) $ \\ \hline
$\sigma _{13}=\left( Ex^{2}+Fxy+Cxz\right) $ & $\sigma _{32}=\left(
Fz^{2}+Dxz+Byz\right) $ \\ \hline
$\sigma _{21}=\left( Dy^{2}+Eyz+Axy\right) $ & $\sigma _{33}=\left(
Ax^{2}+By^{2}+2xyD+xzE+Fyz\right) $ \\ \hline
$\sigma _{22}=\left( Ax^{2}+Cz^{2}+xyD+2xzE+Fyz\right) $ &  \\ \hline
\end{tabular}%
\]
\end{theorem}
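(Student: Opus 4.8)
The plan is to reduce the whole statement to one structural identity for the powers of $\mathfrak{S}$, after which the three formulas are just resummations of the exponential series. First I would identify the matrix (\ref{skewsymquadric}) as the matrix of the linear map $\mathbf{w}\mapsto\mathbf{v}\times\mathbf{w}$ on the quadratic $3$-space, where $\mathbf{v}=x\mathbf{i}+y\mathbf{j}+z\mathbf{k}$; expanding the determinant in the definition of the vector product with $\mathbf{v}$ in the second row and reading off the coefficients of $w_{1},w_{2},w_{3}$ reproduces the rows of $\mathfrak{S}$ exactly. Granting this, I would apply the triple product identity $(\mathbf{u}\times\mathbf{v})\times\mathbf{w}=\Delta\langle\mathbf{v},\mathbf{w}\rangle\mathbf{u}-\Delta\langle\mathbf{u},\mathbf{w}\rangle\mathbf{v}$ twice. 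This yields
\[
\mathfrak{S}^{2}\mathbf{w}=\mathbf{v}\times(\mathbf{v}\times\mathbf{w})=\Delta\langle\mathbf{v},\mathbf{v}\rangle\mathbf{w}-\Delta\langle\mathbf{v},\mathbf{w}\rangle\mathbf{v}
\]
and then, since $\mathbf{v}\times\mathbf{v}=0$,
\[
\mathfrak{S}^{3}\mathbf{w}=\mathbf{v}\times(\mathfrak{S}^{2}\mathbf{w})=\Delta\langle\mathbf{v},\mathbf{v}\rangle\,(\mathbf{v}\times\mathbf{w})=\mathbf{v}^{2}\,\mathfrak{S}\mathbf{w},
\]
using that $\mathbf{v}^{2}=\Delta\langle\mathbf{v},\mathbf{v}\rangle_{\mathfrak{Q}}=\mathfrak{V}(\mathbf{v})$ from the product formula (\ref{quadraticcarpim}). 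Hence $\mathfrak{S}^{3}=\mathfrak{V}(\mathbf{v})\,\mathfrak{S}$, and this single relation drives everything.

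Next I would split according to the type of the unit axis, where the governing quantity is $\mathbf{v}^{2}=\mathfrak{V}(\mathbf{v})$. If $\mathbf{v}$ is unit ellipsoid or unit timelike then $\mathbf{v}^{2}=-1$, so $\mathfrak{S}^{3}=-\mathfrak{S}$; substituting $\mathfrak{S}^{2k+1}=(-1)^{k}\mathfrak{S}$ and $\mathfrak{S}^{2k}=(-1)^{k-1}\mathfrak{S}^{2}$ into $e^{\theta\mathfrak{S}}=I_{3}+\sum_{n\geq 1}\tfrac{\theta^{n}}{n!}\mathfrak{S}^{n}$ and collecting the odd and even tails recovers the Taylor series of $\sin\theta$ and $1-\cos\theta$, giving the first formula. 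If $\mathbf{v}$ is unit spacelike then $\mathbf{v}^{2}=1$, so $\mathfrak{S}^{3}=\mathfrak{S}$, every odd power equals $\mathfrak{S}$ and every even power equals $\mathfrak{S}^{2}$, and the same summation produces $\sinh\theta$ and $\cosh\theta-1$, i.e. the second formula. If $\mathbf{v}$ is lightlike then $\mathbf{v}^{2}=0$, so $\mathfrak{S}$ is nilpotent with $\mathfrak{S}^{3}=0$, the series terminates, and $e^{-\theta\mathfrak{S}}=I_{3}-\theta\mathfrak{S}+\tfrac{\theta^{2}}{2}\mathfrak{S}^{2}$ is the third formula.

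To obtain the explicit matrices (\ref{complex}), (\ref{hiperbolik}) and (\ref{dual}) I would set $\sigma_{ij}:=(\mathfrak{S}^{2})_{ij}$ and substitute the entries of $\mathfrak{S}$ together with these $\sigma_{ij}$ into the three closed forms. A direct multiplication confirms the tabulated values, for instance $(\mathfrak{S}^{2})_{11}=By^{2}+Cz^{2}+Dxy+Exz+2Fyz=\sigma_{11}$ once $B=\alpha_{1}^{2}-\lambda_{1}\alpha_{3}$, $C=\beta_{1}^{2}+\lambda_{1}\beta_{2}$ and $F=\alpha_{1}\beta_{1}+\lambda_{1}\alpha_{2}$ are used, and the remaining eight entries are handled the same way. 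Finally I would check that $\mathfrak{R}^{\theta}_{\mathbf{v}}=e^{\theta\mathfrak{S}}$ is genuinely a rotation of the quadric $\mathfrak{Q}$: the trace of (\ref{skewsymquadric}) vanishes, $(-\alpha_{1}y-\beta_{1}z)+(\alpha_{2}x+\beta_{1}z)+(\alpha_{1}y-\alpha_{2}x)=0$, so $\det e^{\theta\mathfrak{S}}=e^{\theta\,\mathrm{tr}\,\mathfrak{S}}=1$; the form (\ref{deltali matris}) is preserved because $\mathfrak{S}$ is skew-adjoint for it, $\mathfrak{S}^{T}\mathfrak{M}_{\mathfrak{Q}}+\mathfrak{M}_{\mathfrak{Q}}\mathfrak{S}=0$, coming from the antisymmetry $\langle\mathbf{v}\times\mathbf{u},\mathbf{w}\rangle=-\langle\mathbf{u},\mathbf{v}\times\mathbf{w}\rangle$ of the scalar triple product, so that $(e^{\theta\mathfrak{S}})^{T}\mathfrak{M}_{\mathfrak{Q}}e^{\theta\mathfrak{S}}=\mathfrak{M}_{\mathfrak{Q}}$; and the axis is fixed since $\mathfrak{S}\mathbf{v}=\mathbf{v}\times\mathbf{v}=0$ forces $e^{\theta\mathfrak{S}}\mathbf{v}=\mathbf{v}$.

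The hard part will not be conceptual but the algebraic bookkeeping: verifying that all nine entries of $\mathfrak{S}^{2}$ collapse exactly to the tabulated $\sigma_{ij}$, and that the skew-adjointness relation $\mathfrak{S}^{T}\mathfrak{M}_{\mathfrak{Q}}+\mathfrak{M}_{\mathfrak{Q}}\mathfrak{S}=0$ holds, both require systematic use of the defining constraints $\alpha_{1}=\lambda_{3}$, $\beta_{1}=-\lambda_{2}$, $\beta_{3}=-\alpha_{2}$ and the expressions for $A,B,C,D,E,F$ in the definition of $\mathfrak{Q}$; one must also take care in the lightlike case, where $\mathbf{v}^{2}=0$ means the usual normalization is unavailable and "unit" must be read through the lightlike polar form.
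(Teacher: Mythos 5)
Your proposal is correct and its skeleton coincides with the paper's: everything is reduced to the single relation $\mathfrak{S}^{3}=c\,\mathfrak{S}$ with $c=\mathfrak{V}(\mathbf{v})\in\{-1,1,0\}$ according to the type of the unit axis, after which the three formulas are the standard resummations of $e^{\theta\mathfrak{S}}$ into $\sin/\cos$, $\sinh/\cosh$, or a terminating polynomial. The one genuine difference is how that relation is obtained: the paper computes the characteristic polynomial of $\mathfrak{S}$, namely $X^{3}-\mathcal{C}_{\mathbf{v}}\left( \mathbf{v}\right) X=0$, and invokes Cayley--Hamilton, whereas you derive it structurally by identifying $\mathfrak{S}$ with the map $\mathbf{w}\mapsto\mathbf{v}\times\mathbf{w}$ and iterating the triple-product identity. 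Your route is arguably more illuminating (it explains \emph{why} the constant in the cube relation is the value of the quadratic form on the axis, and it hands you $\mathfrak{S}\mathbf{v}=0$ for free), while the paper's is shorter and avoids having to justify the triple-product identity for the generalized product. You also go beyond the paper's proof by actually verifying that $e^{\theta\mathfrak{S}}$ is a rotation of the quadric (trace-zero gives $\det=1$; skew-adjointness $\mathfrak{S}^{T}\mathfrak{M}_{\mathfrak{Q}}+\mathfrak{M}_{\mathfrak{Q}}\mathfrak{S}=0$ gives form-preservation; $\mathfrak{S}\mathbf{v}=0$ fixes the axis) --- the paper simply stops at the closed forms, and the form-preservation identity only appears later in the Cayley-map theorem. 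One bookkeeping caveat: with the paper's sign conventions the tabulated $\sigma_{ij}$ equal $(\mathfrak{S}^{2})_{ij}$ only on the diagonal; the off-diagonal entries carry the prefactor $(\cos\theta-1)$ rather than $(1-\cos\theta)$, so there $\sigma_{ij}=-(\mathfrak{S}^{2})_{ij}$, which you would discover in the direct multiplication you describe.
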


\begin{proof}
Let's first find the characteristic polynomial of the matrix $\mathfrak{S}$
required for the Rodrigues rotation formula and the powers of the matrix $%
\mathfrak{S}$. The characteristic polynomial of $\mathfrak{S}$ is%
\[
X^{3}-\mathcal{C}_{\mathbf{v}}\left( \mathbf{v}\right) X=0.
\]%
According to Cayley-Hamilton theorem, we have%
\[
\mathfrak{S}^{3}=\mathcal{C}_{\mathbf{v}}\left( \mathbf{v}\right) \mathfrak{%
S.}
\]%
The relation between the powers of $\mathfrak{S}$ are%
\[
\mathfrak{S}^{4}=\mathcal{C}_{\mathbf{v}}\left( \mathbf{v}\right) \mathfrak{S%
}^{2}\text{, }\mathfrak{S}^{5}=\mathcal{C}_{\mathbf{v}}\left( \mathbf{v}%
\right) \mathfrak{S}^{3}\text{, }\mathfrak{S}^{6}=\left( \mathcal{C}_{%
\mathbf{v}}\left( \mathbf{v}\right) \right) ^{2}\mathfrak{S}^{2}\text{, }%
\mathfrak{S}^{7}=\left( \mathcal{C}_{\mathbf{v}}\left( \mathbf{v}\right)
\right) ^{3}\mathfrak{S}\text{ ...}
\]%
and so we have%
\begin{equation}
\begin{tabular}{lll}
$\mathfrak{S}^{2n}=\left( \mathcal{C}_{\mathbf{v}}\left( \mathbf{v}\right)
\right) ^{n-1}\mathfrak{S}^{2}$ & ,$\mathfrak{S}^{2n-1}=\left( \mathcal{C}_{%
\mathbf{v}}\left( \mathbf{v}\right) \right) ^{n-1}\mathfrak{S}$ & ,$\text{if 
}\mathbf{v}\text{ is a unit ellipsoid or timelike}$ \\ 
$\mathfrak{S}^{2n}=\mathfrak{S}^{2}$ & ,$\mathfrak{S}^{2n-1}=\mathfrak{S}$ & 
,$\text{if }\mathbf{v}\text{ is a unit spacelike}$ \\ 
$\mathfrak{S}^{2n}=0$ & ,$\mathfrak{S}^{2n-1}=0$ for $n>2$ & ,$\text{if }%
\mathbf{v}\text{ is a lightlike}$%
\end{tabular}
\label{M--usleri quad}
\end{equation}%
For unit ellipsoid or timelike vector $\mathbf{v}$, the rotation matrix is%
\begin{eqnarray*}
\mathfrak{R}_{\mathbf{v}}^{\theta } &=&e^{\theta \mathfrak{S}}=I_{3}+\theta 
\mathfrak{S}+\dfrac{\theta ^{2}\mathfrak{S}^{2}}{2!}+\dfrac{\theta ^{3}%
\mathfrak{S}^{3}}{3!}+\dfrac{\theta ^{4}\mathfrak{S}^{4}}{4!}+... \\
&=&I_{3}+\theta \mathfrak{S}+\dfrac{\theta ^{2}\mathfrak{S}^{2}}{2!}-\dfrac{%
\theta ^{3}\mathfrak{S}}{3!}-\dfrac{\theta ^{4}\mathfrak{S}^{2}}{4!}+... \\
&=&I_{3}+\left( \theta -\dfrac{\theta ^{3}}{3!}+\dfrac{\theta ^{5}}{5!}%
-...\right) \mathfrak{S}+\left( \dfrac{\theta ^{2}}{2!}-\dfrac{\theta ^{4}}{%
4!}+\dfrac{\theta ^{6}}{6!}-...\right) \mathfrak{S}^{2} \\
\mathfrak{R}_{\mathbf{v}}^{\theta } &=&I_{3}+\left( \sin \theta \right) 
\mathfrak{S}+\left( 1-\cos \theta \right) \mathfrak{S}^{2}\text{.}
\end{eqnarray*}%
For unit spacelike vector $\mathbf{v}$, the rotation matrix is%
\begin{eqnarray*}
\mathfrak{R}_{\mathbf{q}}^{\theta } &=&e^{\theta \mathfrak{S}}=I_{3}+\theta 
\mathfrak{S}+\dfrac{\theta ^{2}\mathfrak{S}^{2}}{2!}+\dfrac{\theta ^{3}%
\mathfrak{S}^{3}}{3!}+\dfrac{\theta ^{4}\mathfrak{S}^{4}}{4!}+... \\
&=&I_{3}+\theta \mathfrak{S}+\dfrac{\theta ^{2}\mathfrak{S}^{2}}{2!}+\dfrac{%
\theta ^{3}\mathfrak{S}}{3!}+\dfrac{\theta ^{4}\mathfrak{S}^{2}}{4!}+... \\
&=&I_{3}+\left( \theta +\dfrac{\theta ^{3}}{3!}+\dfrac{\theta ^{5}}{5!}%
+...\right) \mathfrak{S}+\left( \dfrac{\theta ^{2}}{2!}+\dfrac{\theta ^{4}}{%
4!}+\dfrac{\theta ^{6}}{6!}+...\right) \mathfrak{S}^{2} \\
\mathfrak{R}_{\mathbf{q}}^{\theta } &=&I_{3}+\left( \sinh \theta \right) 
\mathfrak{S}-\left( 1-\cosh \theta \right) \mathfrak{S}^{2}.
\end{eqnarray*}%
For unit lightlike vector $\mathbf{v}$, the rotation matrix is%
\[
\mathfrak{R}_{\mathbf{v}}^{\theta }=e^{-\theta \mathfrak{S}}=I_{3}-\theta 
\mathfrak{S}+\dfrac{\theta ^{2}\mathfrak{S}^{2}}{2!}\text{.}
\]%
Extending these equations gives the quadratic rotation matrices (\ref%
{complex}), (\ref{hiperbolik}), (\ref{dual}).
\end{proof}

\subsection{Cayley Representation for Quadratic numbers}

For the skew-symmetric matrix $\mathfrak{S}$ of the form (\ref%
{skewsymquadric}), $\left( I_{3}+\mathfrak{S}\right) $ have an inverse, and
its Cayley map is%
\[
\mathcal{R}_{\mathbf{v}}\left( \mathfrak{S}\right) =\left( I_{3}-\mathfrak{S}%
\right) \left( I_{3}+\mathfrak{S}\right) ^{-1}=\left( I_{3}+\mathfrak{S}%
\right) ^{-1}\left( I_{3}-\mathfrak{S}\right)
\]%
where the unit vector $\mathbf{v}$ forming the entires of $\mathfrak{S}$ is
not a unit spacelike quadratic vector \cite{erdogdu2015},\cite{nevsovic2016}%
, \cite{ozdemir2016}.

\begin{theorem}
Let $\mathfrak{S}$ is a skew-symmetric matrix with $\mathbf{v}=x\mathbf{i+}y%
\mathbf{j+}z\mathbf{k}$ in the form (\ref{skewsymquadric}). If $\mathbf{v}$
is not a unit spacelike vector, then%
\[
\mathcal{R}_{\mathbf{v}}\left( \mathfrak{S}\right) =\left( I_{3}-\mathfrak{S}%
\right) \left( I_{3}+\mathfrak{S}\right) ^{-1}
\]%
is a quadratic rotation matrix where $\mathbf{v}$ is the rotation matrix.
So, we get the following rotation matrix:%
\[
\mathcal{R}_{\mathbf{v}}=\rho \mathcal{M}_{\mathbf{v}}
\]%
where $\rho =\frac{1}{\Delta \left\langle \mathbf{v,v}\right\rangle _{%
\mathfrak{Q}}-1}$ and%
\[
\mathcal{M=}%
\begin{bmatrix}
\begin{array}{c}
Ax^{2}-By^{2}-Cz^{2} \\ 
-2Fyz-2z\beta _{1} \\ 
-2y\alpha _{1}-1%
\end{array}
& 2\left( 
\begin{array}{c}
Fxz+Bxy+Dx^{2} \\ 
+x\alpha _{1}-z\lambda _{1}%
\end{array}%
\right) & 2\left( 
\begin{array}{c}
Ex^{2}+Fxy+Cxz \\ 
+x\beta _{1}+y\lambda _{1}%
\end{array}%
\right) \\ 
2\left( 
\begin{array}{c}
Axy+Dy^{2}+Eyz \\ 
-z\beta _{2}-y\alpha _{2}%
\end{array}%
\right) & 
\begin{array}{c}
-Ax^{2}+By^{2}-Cz^{2} \\ 
-2Exz+2z\beta _{1} \\ 
+2x\alpha _{2}-1%
\end{array}
& 2\left( 
\begin{array}{c}
Fy^{2}+Exy+Cyz \\ 
+x\beta _{2}-y\beta _{1}%
\end{array}%
\right) \\ 
2\left( 
\begin{array}{c}
Axz+Ez^{2}+Dyz \\ 
+z\alpha _{2}-y\alpha _{3}%
\end{array}%
\right) & 2\left( 
\begin{array}{c}
Fz^{2}+Dxz+Byz \\ 
+x\alpha _{3}-z\alpha _{1}%
\end{array}%
\right) & 
\begin{array}{c}
-Ax^{2}-By^{2}+Cz^{2} \\ 
-2xyD+2y\alpha _{1} \\ 
-2x\alpha _{2}-1%
\end{array}%
\end{bmatrix}%
.
\]
\end{theorem}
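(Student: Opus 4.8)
The plan is to recognise $\mathcal{R}_{\mathbf{v}}(\mathfrak{S})$ as the classical Cayley transform of a matrix lying in the Lie algebra of the quadratic orthogonal group, and then to read off its explicit entries from the Cayley--Hamilton relation already established for $\mathfrak{S}$ in the proof of the Rodrigues theorem. Two structural facts about $\mathfrak{S}$ drive everything. The first is the relation $\mathfrak{S}^{3}=\mathcal{C}_{\mathbf{v}}(\mathbf{v})\,\mathfrak{S}$, from which the eigenvalues of $\mathfrak{S}$ are $0$ and $\pm\sqrt{\mathcal{C}_{\mathbf{v}}(\mathbf{v})}$; consequently $\det(I_{3}+\mathfrak{S})=1-\mathcal{C}_{\mathbf{v}}(\mathbf{v})$. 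Since $\mathcal{C}_{\mathbf{v}}(\mathbf{v})=\Delta\langle\mathbf{v},\mathbf{v}\rangle_{\mathfrak{Q}}$, this determinant vanishes precisely when $\mathbf{v}$ is a unit spacelike vector, which is exactly the case excluded by hypothesis; hence $(I_{3}+\mathfrak{S})^{-1}$ exists and the Cayley map is well defined. The second fact, which I would verify directly from the defining relations among $A,B,C,D,E,F$ and $\alpha_{i},\beta_{i},\lambda_{i}$, is that $\mathfrak{S}$ is skew-symmetric with respect to the quadratic metric, i.e. $\mathfrak{S}^{T}\mathbf{M}_{\mathfrak{Q}}=-\mathbf{M}_{\mathfrak{Q}}\mathfrak{S}$, equivalently $\mathbf{M}_{\mathfrak{Q}}\mathfrak{S}$ is antisymmetric.

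Granting skew-symmetry, I would then run the standard Cayley argument. Writing $P=I_{3}+\mathfrak{S}$ and $Q=I_{3}-\mathfrak{S}$, the relation $\mathfrak{S}^{T}\mathbf{M}_{\mathfrak{Q}}=-\mathbf{M}_{\mathfrak{Q}}\mathfrak{S}$ gives $Q^{T}\mathbf{M}_{\mathfrak{Q}}Q=P^{T}\mathbf{M}_{\mathfrak{Q}}P=\mathbf{M}_{\mathfrak{Q}}(I_{3}-\mathfrak{S}^{2})$, because the cross terms cancel and $\mathfrak{S}^{T}\mathbf{M}_{\mathfrak{Q}}\mathfrak{S}=-\mathbf{M}_{\mathfrak{Q}}\mathfrak{S}^{2}$. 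Since $P$ and $Q$ are polynomials in $\mathfrak{S}$ they commute with each other and with $P^{-1}$, so $\mathcal{R}_{\mathbf{v}}^{T}\mathbf{M}_{\mathfrak{Q}}\mathcal{R}_{\mathbf{v}}=(P^{T})^{-1}\bigl(Q^{T}\mathbf{M}_{\mathfrak{Q}}Q\bigr)P^{-1}=(P^{T})^{-1}\bigl(P^{T}\mathbf{M}_{\mathfrak{Q}}P\bigr)P^{-1}=\mathbf{M}_{\mathfrak{Q}}$, which is precisely the statement that $\mathcal{R}_{\mathbf{v}}$ preserves the quadratic scalar product. For the determinant, the eigenvalue list gives $\det(I_{3}-\mathfrak{S})=1-\mathcal{C}_{\mathbf{v}}(\mathbf{v})=\det(I_{3}+\mathfrak{S})$, so $\det\mathcal{R}_{\mathbf{v}}=1$ and $\mathcal{R}_{\mathbf{v}}$ is a proper rotation. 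Finally, since $\mathfrak{S}\mathbf{v}=0$ (the vector $\mathbf{v}=(x,y,z)$ spans the eigenspace for the eigenvalue $0$), both $Q\mathbf{v}=\mathbf{v}$ and $P^{-1}\mathbf{v}=\mathbf{v}$, whence $\mathcal{R}_{\mathbf{v}}\mathbf{v}=\mathbf{v}$ and $\mathbf{v}$ is the rotation axis.

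To obtain the closed form $\mathcal{R}_{\mathbf{v}}=\rho\,\mathcal{M}$, I would convert the inverse into a polynomial in $\mathfrak{S}$ using $\mathfrak{S}^{3}=\mathcal{C}_{\mathbf{v}}(\mathbf{v})\mathfrak{S}$. Solving $(I_{3}+\mathfrak{S})(aI_{3}+b\mathfrak{S}+d\mathfrak{S}^{2})=I_{3}$ for scalars $a,b,d$ yields $(I_{3}+\mathfrak{S})^{-1}=I_{3}+\rho\mathfrak{S}-\rho\mathfrak{S}^{2}$ with $\rho=\dfrac{1}{\mathcal{C}_{\mathbf{v}}(\mathbf{v})-1}=\dfrac{1}{\Delta\langle\mathbf{v},\mathbf{v}\rangle_{\mathfrak{Q}}-1}$. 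Multiplying on the left by $I_{3}-\mathfrak{S}$ and again reducing $\mathfrak{S}^{3}$ collapses the expression to $\mathcal{R}_{\mathbf{v}}=I_{3}+2\rho(\mathfrak{S}-\mathfrak{S}^{2})=\rho\mathcal{M}$, so that $\mathcal{M}=(\mathcal{C}_{\mathbf{v}}(\mathbf{v})-1)I_{3}+2\mathfrak{S}-2\mathfrak{S}^{2}$. The entries of $\mathfrak{S}^{2}$ are exactly the coefficients $\sigma_{ij}$ tabulated in the previous theorem, so substituting them reproduces the displayed matrix $\mathcal{M}$ entry by entry.

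The genuinely nontrivial step is the verification that $\mathbf{M}_{\mathfrak{Q}}\mathfrak{S}$ is antisymmetric; everything after that is the formal Cayley calculation together with the bookkeeping identification $\left(\mathfrak{S}^{2}\right)_{ij}=\sigma_{ij}$. I expect the main obstacle to be carrying out the skew-symmetry check cleanly, since it requires invoking all of the constraints $A=\alpha_{2}^{2}+\alpha_{3}\beta_{2}$, $\alpha_{1}=\lambda_{3}$, $\beta_{1}=-\lambda_{2}$, and the remaining defining relations to cancel the off-diagonal mismatches; once these relations are in hand the cancellations are mechanical, and the positive- versus indefinite-signature distinction enters only through the scalar $\Delta$, which commutes through the whole argument and therefore does not affect orthogonality.
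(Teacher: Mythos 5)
Your proposal follows essentially the same route as the paper's proof: both hinge on the metric skew-symmetry $\mathfrak{S}^{T}\mathfrak{M}_{\mathfrak{Q}}=-\mathfrak{M}_{\mathfrak{Q}}\mathfrak{S}$ (stated in the paper as the pair of transpose identities for $I\pm\mathfrak{S}$), the conjugation computation $\mathcal{R}_{\mathbf{v}}^{T}\mathfrak{M}_{\mathfrak{Q}}\mathcal{R}_{\mathbf{v}}=\mathfrak{M}_{\mathfrak{Q}}$, and $\det\left( I_{3}\pm\mathfrak{S}\right) =1-\left\langle \mathbf{v,v}\right\rangle _{\mathfrak{Q}}$; you additionally supply the Cayley--Hamilton reduction $\mathcal{R}_{\mathbf{v}}=I_{3}+2\rho\left( \mathfrak{S}-\mathfrak{S}^{2}\right)$, which the paper omits but which correctly yields the displayed $\mathcal{M}$. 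One bookkeeping caveat: the tabulated coefficients satisfy $\left( \mathfrak{S}^{2}\right) _{ii}=\sigma _{ii}$ but $\left( \mathfrak{S}^{2}\right) _{ij}=-\sigma _{ij}$ for $i\neq j$, so your final substitution step needs that sign, although your closed form $\mathcal{M}=\left( \mathcal{C}_{\mathbf{v}}\left( \mathbf{v}\right) -1\right) I_{3}+2\mathfrak{S}-2\mathfrak{S}^{2}$ is correct as stated.
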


\begin{proof}
We have%
\[
\left( I+\mathfrak{S}\right) ^{T}\mathfrak{M}_{\mathfrak{Q}}=\mathfrak{M}_{%
\mathfrak{Q}}\left( I-\mathfrak{S}\right) \text{ and }\left( I-\mathfrak{S}%
\right) ^{T}\mathfrak{M}_{\mathfrak{Q}}=\mathfrak{M}_{\mathfrak{Q}}\left( I+%
\mathfrak{S}\right) \text{.}
\]

Using these equalities, it can be find that%
\[
\left( \mathcal{R}_{\mathbf{v}}\right) ^{T}\mathfrak{M}_{\mathfrak{Q}}\left( 
\mathcal{R}_{\mathbf{v}}\right) =\left( \left( I+\mathfrak{S}\right) \left(
I-\mathfrak{S}\right) ^{-1}\right) ^{T}\mathfrak{M}_{\mathfrak{Q}}\left( I+%
\mathfrak{S}\right) \left( I-\mathfrak{S}\right) ^{-1}=\mathfrak{M}_{%
\mathfrak{Q}}\text{.}
\]%
Also, we have $\det \mathcal{R}_{\mathbf{v}}=1$ because of $\det \left( I+%
\mathfrak{M}_{\mathfrak{Q}}\right) =1-\left\langle \mathbf{v,v}\right\rangle
_{\mathfrak{Q}}$ and $\det \left( I-\mathfrak{M}_{\mathfrak{Q}}\right) ^{-1}=%
\dfrac{1}{1-\left\langle \mathbf{v,v}\right\rangle _{\mathfrak{Q}}}.$ That
is, $\mathcal{R}_{\mathbf{v}}$ is a quadratic rotation matrix.$\ $Also, the
eigenvalue the matrix $\mathcal{R}_{\mathbf{v}}$ are%
\[
\mathfrak{e}_{1},\mathfrak{e}_{2}=\pm \frac{\left( \Delta \left\langle 
\mathbf{v,v}\right\rangle _{\mathfrak{Q}}+2\sqrt{\Delta \left\langle \mathbf{%
v,v}\right\rangle _{\mathfrak{Q}}}+1\right) }{1-\Delta \left\langle \mathbf{%
v,v}\right\rangle _{\mathfrak{Q}}},\text{ and }\mathfrak{e}_{3}=1.
\]%
And the eigenvector corresponding to $1$ is $\mathbf{v}$. The rotation angle 
$\theta $ is given by%
\[
\begin{tabular}{ll}
$\tan \theta =\frac{2\left\Vert \mathbf{v}\right\Vert _{\mathfrak{Q}}}{%
1-\left\Vert \mathbf{v}\right\Vert _{\mathfrak{Q}}^{2}}$ & ,$\text{if }%
\mathbf{v}\text{ is a unit ellipsoid or timelike}$ \\ 
$\cosh \theta =\dfrac{1+\left\langle \mathbf{v,v}\right\rangle _{\mathfrak{Q}%
}}{1-\left\langle \mathbf{v,v}\right\rangle _{\mathfrak{Q}}}$ & ,$\text{if }%
\mathbf{v}\text{ is a unit spacelike}$ \\ 
$\theta =1$ & ,$\text{if }\mathbf{v}\text{ is a lightlike}$%
\end{tabular}%
.
\]
\end{proof}

\begin{remark}
When constructing the number system of any quadratic form $\mathfrak{Q}$,
the following equations between the coefficients of the quadratic form and
the coefficients $\alpha _{1},\alpha _{2},\alpha _{3},\beta _{1},\beta _{2},$
and $\lambda _{1}$ of the number system can be used.%
\[
\begin{tabular}{|l|l|l|l|l|l|l|}
\hline
Number & $\alpha _{1}$ & $\alpha _{2}$ & $\alpha _{3}$ & $\beta _{1}$ & $%
\beta _{2}$ & $\lambda _{1}$ \\ \hline
Value & $-\dfrac{M_{13}}{\Delta \left\vert \Gamma \right\vert }$ & $\dfrac{%
M_{23}}{\Delta \left\vert \Gamma \right\vert }$ & $-\dfrac{M_{33}}{\Delta
\left\vert \Gamma \right\vert }$ & $-\dfrac{M_{12}}{\Delta \left\vert \Gamma
\right\vert }$ & $\dfrac{M_{22}}{\Delta \left\vert \Gamma \right\vert }$ & $-%
\dfrac{M_{11}}{\Delta \left\vert \Gamma \right\vert }$ \\ \hline
\end{tabular}%
\]%
where $M_{ij}$ is minors of the matrix $\Delta M_{\mathfrak{Q}}$ and $%
\left\vert \Gamma \right\vert =\sqrt{\left\vert \det \Delta M_{\mathfrak{Q}%
}\right\vert }$. Note that if some of the quadratic coefficients are zero,
these numbers $\alpha _{1},\alpha _{2},\alpha _{3},\beta _{1},\beta _{2},$
and $\lambda _{1}$ can be obtained differently.
\end{remark}

Now let's give examples of how the rotation transformation is realized using
positive definite and indefinite quadratic numbers.

\begin{example}
Let's create a positive definite bilinear form with the following ellipsoid%
\[
6x^{2}+6xy+4xz+2y^{2}+4yz+3z^{2}=1
\]%
and exemplify some of the geometric results obtained throughout the article.
The coefficients of the bilinear form and quadratic number system
corresponding to this ellipsoid are as follows:%
\begin{equation}
\left\langle \mathbf{u},\mathbf{v}\right\rangle _{\mathfrak{EQ}}=%
\begin{bmatrix}
x \\ 
y \\ 
z%
\end{bmatrix}%
^{T}%
\begin{bmatrix}
6 & 3 & 2 \\ 
3 & 2 & 2 \\ 
2 & 2 & 3%
\end{bmatrix}%
\begin{bmatrix}
x \\ 
y \\ 
z%
\end{bmatrix}
\label{ellipsoidornek}
\end{equation}
\end{example}

\[
\begin{tabular}{|l|l|l|l|l|l|l|}
\hline
Number & $\alpha _{1}$ & $\alpha _{2}$ & $\alpha _{3}$ & $\beta _{1}$ & $%
\beta _{2}$ & $\lambda _{1}$ \\ \hline
Value & $2$ & $-6$ & $3$ & $5$ & $-14$ & $2$ \\ \hline
\end{tabular}%
\]

Let's take the vector $\mathbf{v}=\left( 0,0,1/\sqrt{3}\right) $ and the
point $A=\left( 0,1/\sqrt{2},0\right) $ over this ellipsoid. Let us take the
plane%
\[
\frac{2}{3}x+\frac{2}{3}y+z-\frac{1}{3}\sqrt{2}=0,
\]%
which is orthogonal to the vector $\mathbf{v}$ and passes through the point $%
A$. The conic formed by the intersection of this quadric surface (\ref%
{ellipsoidornek}) and the given plane and its center are as follows:%
\[
\begin{tabular}{ll}
$\frac{14}{3}x^{2}+\frac{10}{3}xy+\frac{2}{3}y^{2}+\frac{2}{3}=1$ & , $%
C=\left( 0,0,\frac{1}{3}\sqrt{2}\right) $%
\end{tabular}%
\]%
For any $\mathbf{u,v}$, the vector product on this quadratic form is as
follows:%
\[
\left\vert 
\begin{array}{ccc}
2\mathbf{i}-5\mathbf{j+}2\mathbf{k} & -5\mathbf{i}+14\mathbf{j}-6\mathbf{k}
& 2\mathbf{i}-6\mathbf{j+}3\mathbf{k} \\ 
u_{1} & u_{2} & u_{3} \\ 
v_{1} & v_{2} & v_{3}%
\end{array}%
\right\vert
\]%
Let's take 3 points $A\left( 0,1/\sqrt{2},0\right) ,C,B=\left( 1/\sqrt{2}%
,0,0\right) $ on the given plane and show that the normal of the plane is $%
\mathbf{v}$ with the help of the vector product.%
\[
CA\times CB=\left\vert 
\begin{array}{ccc}
2\mathbf{i}-5\mathbf{j+}2\mathbf{k} & -5\mathbf{i}+14\mathbf{j}-6\mathbf{k}
& 2\mathbf{i}-6\mathbf{j+}3\mathbf{k} \\ 
0 & 1/\sqrt{2} & -\frac{1}{3}\sqrt{2} \\ 
1/\sqrt{2} & 0 & -\frac{1}{3}\sqrt{2}%
\end{array}%
\right\vert =\left( 0,0,-1/6\right)
\]%
So, we get%
\[
\mathbf{v=}\left( OA\times OB\right) /\left\Vert OA\times OB\right\Vert .
\]

\end{document}